\newtheorem{theorem}{Theorem}[section]
\newtheorem{lemma}{Lemma}[section]
\theoremstyle{definition}
\theoremstyle{remark}
\newtheorem{remark}{Remark}[section]
\numberwithin{equation}{section}
\title[Time-domain direct sampling method]{A novel time-domain direct sampling approach for inverse scattering problems in acoustics}
\author{Yukun Guo}
\address{School of Mathematics, Harbin Institute of Technology, Harbin, P. R. China.}
\email{ykguo@hit.edu.cn}
\author{Hongjie Li}
\address{Yau Mathematical Sciences Center, Tsinghua University, Beijing, China;
Yanqi Lake Beijing Institute of Mathematical Sciences and Applications, Beijing, China}
\email{hongjieli@tsinghua.edu.cn}
\author{Xianchao Wang}
\address{School of Mathematics, Harbin Institute of Technology, Harbin, P. R. China.}
\email{xcwang90@gmail.com; xcwang@hit.edu.cn}
\date{} 
\begin{document}

\maketitle

\begin{abstract}
This work is concerned with an inverse scattering problem of determining unknown scatterers from time-dependent acoustic measurements. A novel time-domain direct sampling method is developed to efficiently determine both the locations and shapes of inhomogeneous media. In particular, our approach is very easy to implement since only cheap space-time integrations are involved in the evaluation of the imaging functionals.
Based on the  Fourier-Laplace transform, we establish an inherent connection between the time-domain and frequency-domain direct sampling method.
Moreover, rigorous theoretical justifications and numerical experiments are provided to verify the validity and feasibility of the proposed method.
\medskip

\noindent{\bf Keywords:}~~  time-domain  direct sampling method, Fourier-Laplace transform, inverse  scattering problem, acoustic wave.
\end{abstract}

\section{Introduction}

In this paper, we consider an  inverse acoustic scattering problem of  reconstructing unknown  objects  by the associated time-dependent near-field measurements.
This type of inverse problem arises in a variety of practical applications and it attracts significant attentions in many fields of science and technology, such as sonar detection, medical imaging \cite{Liu2015}, non-destructive testing \cite{Ammari2013} and so on.

To begin with, we present the mathematical formulation of the time-dependent acoustic scattering problem in the inhomogeneous medium.
Let $c_0\in \mathbb{R}_+$ denote the background sound speed and $c(x)\in L^{\infty}(\mathbb{R}^d), d=2,3,$ be the sound speed in the inhomogeneous medium with $c(x)>0$. Suppose that $c(x)-c_0$ is compactly supported in $\Omega \in\mathbb{R}^d$ and $\mathbb{R}^d\backslash\overline{\Omega }$ is connected, where $\Omega$ is occupied by  the inhomogeneous medium.
We further assume that the incident wave $p^i(x,t)$ is a causal  signal (that is, $p^i(x,t)\equiv 0$ for $t\leq0$) emitted from a point $x\in \mathbb{R}^d\backslash \overline{\Omega }$.
Given the incident wave $p^i$ and the medium scatterer $\Omega$, the forward scattering problem is to find the scattered field $p^s(x,t)$ that satisfies the following wave equation
\begin{equation}\label{eq:main}
\begin{aligned}
&  c^{-2}(x)\partial_{tt}p^s(x,t)-\Delta p^s(x,t)=\left(c_0^{-2}-c^{-2}(x)\right) \partial_{tt} p^i(x,t), \quad (x,t)\in \mathbb{R}^d \times \mathbb{R}_+, \medskip\\
&    p^s(x,0)=\partial_t p^s(x,0)=0,   \qquad   x\in \mathbb{R}^d.
\end{aligned}
\end{equation}
The well-posedness of the forward scattering problem \eqref{eq:main} can be conveniently found in \cite{Guo2013, Cakoni2017}.
Throughout, let $\Gamma  \subset \mathbb{R}^d \backslash \overline{\Omega}$ be a closed or an open measurement surface such that the receivers are away from the target objects.
 Define the measured scattered  field by
\begin{equation*}
\Lambda:=\{ p^s(x,t): (x,t)\in \Gamma\times \mathbb{R}_+  \}.
\end{equation*}
 The inverse problem that we are concerned with is to determine the locations and shapes of the medium scatterers $\Omega$ by the knowledge of the near-field measurements $\Lambda$.
 Figure \ref{fig-model} provides a schematic illustration of the time-domain scattering problem in $\mathbb{R}^2$.
 It is noted that the unknown objects can be uniquely determined by
boundary measurements $\Lambda$ with a single incident wave when  the sound speed $c(x)$ is non-trapping and close to a constant \cite{Ma2023}.

 \begin{figure}
\centering
   \includegraphics[width=0.8\textwidth]{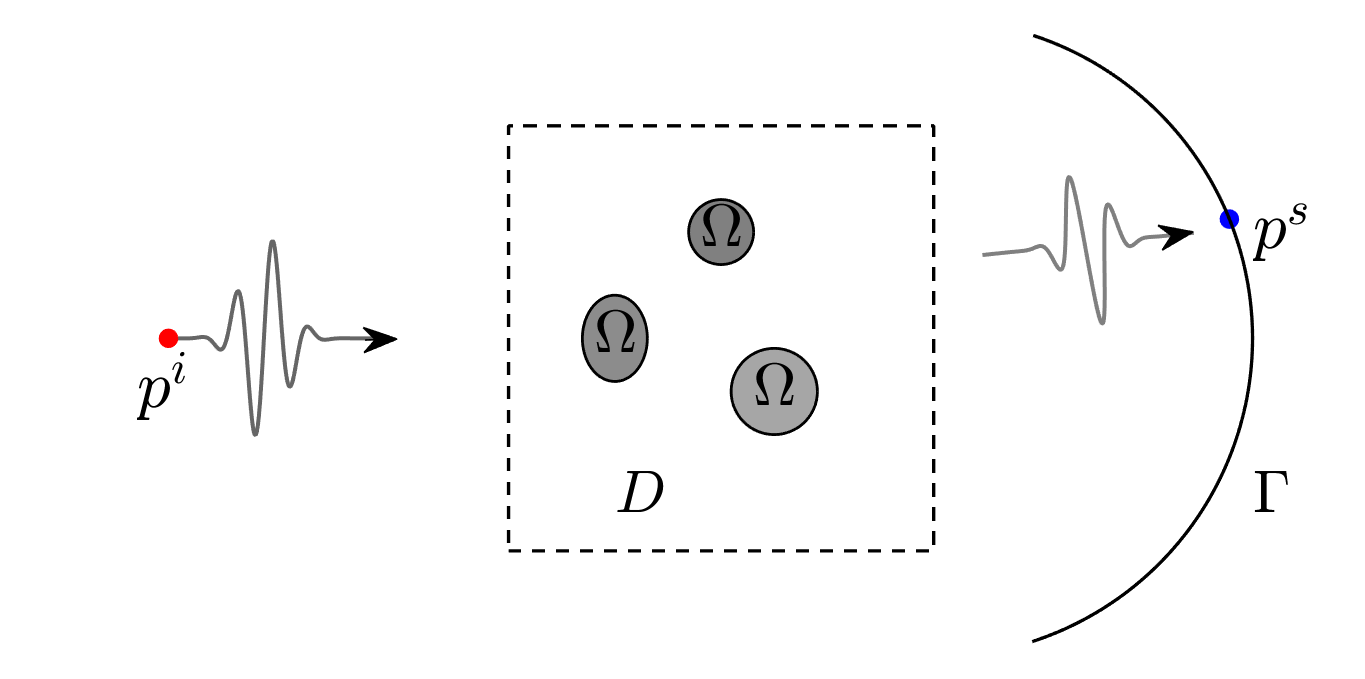}
    \caption{\label{fig-model} The schematic illustration of the time-domain scattering model. The red point denotes the location of an acoustic source that emits  causal temporal pulse wave. The union of domains labeled by $\Omega$ signifies medium scatterer.  The receivers are distributed on the curve $\Gamma$ to receive the scattered field $p^s$. The dashed square denotes the sampling domain $D$.}
\end{figure}

In recent decades, many numerical methods have been developed to deal with the above inverse scattering problem.  Among various reconstruction approaches, we are particularly interested in the so-called sampling-type methods. This class of approaches resort to establishing imaging functionals to distinguish whether a sampling point is inside or outside the scatterer's domain and hence can qualitatively identify the location and shape of the unknown scatterer\cite{Colton2019}.
 Significantly, sampling-type methods possess several advantages in contrast to conventional iterative approaches.
On the one hand, these sampling-type approaches are independent on the {\it a priori} information of scatterer's physical properties or boundary conditions.  On the other hand, these imaging functionals can be computed in a very efficient way and they are also robust to the noise.

Depending on whether directly using  the measured time-dependent data,  the sampling-type methods can be classified into two categories: time-domain ones and frequency-domain ones. The time-domain sampling methods  primarily utilize the measured time-dependent scattered field  to  determine the unknown objects.
We refer the reader to the time reversal method \cite{Ammari2013, Fink1997}, the synthetic aperture radar technique \cite{Cheney2001}, the total focusing method\cite{Holmes2005} and so on \cite{Klibanov2022, Sini2022}.
 However, if the measured data changes rapidly over a short period of time, it is difficult to use the time-domain methods \cite{Stankovic2014}.
In such cases, one usually transforms the  time-dependent data into frequency domain and  utilizes the frequency-domain  methods to recover unknown scatterers.
 For the frequency-domain sampling methods,
interested readers could refer the linear sampling method \cite{Cakoni2011, Li2010}, the factorization method\cite{Ammari2007, Kirsch2007}, the probe method \cite{Potthast2006}, the direct sampling method\cite{Ito2012, Li2013} and so on \cite{Chen2013, Liu2017}.

In comparison to time-domain sampling methods, the frequency-domain ones can offer better stability and computational efficiency, whereas they require a large amount of multi-static data on a large spatial aperture.
In order to decrease the number of sources and receivers, the linear sampling method and factorization method were extended to the time domain, see e.g. \cite{Chen2010, Guo2013, Guo2015, Cakoni2019}.
 Actually, the time-domain linear sampling method and factorization method were hindered by the theoretical difficulty associated with the interior transmission eigenvalue problems for a long time. Until recently, the solvability of the time-domain linear sampling method were rigorous justified in \cite{Prunty2019, Cakoni2021}.
Nevertheless, the classical direct sampling method \cite{Ito2012} does not suffer the  that interior transmission eigenvalue problems.
Moreover, to our best knowledge, the time-domain direct sampling method has not yet been established in the literature although there are some attempts
\cite{Guo2016, Wang}.
The major challenge of the time-domain direct sampling method lies in the design of the imaging functionals.
Hence, it has important theoretical and practical significance to  propose an efficient  time-domain direct sampling method.

The goal of this article is to develop a novel time-domain direct sampling method for reconstructing the locations and shapes of  unknown targets.
We first directly present the imaging/indicator functionals for the inverse acoustic scattering problems in both two and three dimensions as follows:
\begin{equation}\label{eq:indicator}
  \mathcal{I}^{(d)}(z)=\int_{-\infty}^{+\infty} \left| \int_{\Gamma} p^s(x,t+c_0^{-1}|x-z|)\,  \varphi_{\sigma}^{(d)}(x,t,z)  \, \mathrm{d}s(x)\right|^2 \mathrm{d}t, \quad d=2,\,3,
\end{equation}
where $z\in D$ denotes the sampling point, $p^s$ is the time-dependent scattered field  and  $\varphi_{\sigma}^{(d)}$ is the so-called test function, i.e.,
\begin{equation}\label{eq:test}
 \varphi_{\sigma}^{(d)}(x,t,z)=
  \begin{cases}
  \displaystyle \frac{\mathrm{e}^{-\sigma (t+ c_0^{-1}|x-z|)}}{\sqrt{8\pi c_0^{-1}|x-z|}}, & d=2, \bigskip\\
 \displaystyle \frac{\mathrm{e}^{-\sigma(t+  c_0^{-1}|x-z|)}}{4\pi|x-z|}, & d=3.
  \end{cases}
\end{equation}
Here $\sigma\geq 0$ is a  constant and it depends on the the waveform of the incident wave.



In this work, we first designed a novel imaging function \eqref{eq:indicator} for the time-domain direct sampling method. Then we establish the inherent connection between the time-domain and classical frequency-domain direct sampling method \cite{Ito2012} based on the Fourier-Laplace transform. Furthermore, we present rigorous mathematical analysis to justify the indicator behaviour of the proposed imaging functionals \eqref{eq:indicator} in the scenario involving the point-like scatterers. 
The promising features of our time-domain direct sampling method can be summarized in three aspects.
Firstly, the method involves formulating imaging functions by directly convoluting time-dependent scattered data with a test function.
Moreover, the imaging functions do not rely on any matrix inversion or forward solution process.
Thus, our approach is easy to implement with high computational efficiency.
Secondly, in comparison to the total focusing method\cite{Holmes2005}, the proposed method  not only utilizes the  travel time information of the recorded acoustic data but also makes use of the  echo amplitude of the scattered field.  Thus,  our approach theoretically has better imaging performance in recovering  unknown objects.    Thirdly,  the  proposed method has nearly no restrictions on the waveform of the incident wave. Regardless of whether the input signal is an impulse function, a periodic function or a tempered function, the imaging functions can demonstrate excellent performance when a suitable parameter $\sigma$ is selected. Hence, our approach can be generalized to more practical scenarios since it could handle a wide range of input signals.

The rest of the paper is organized as follows. Section 2 introduces the Sobolev spaces and the Fourier-Laplace transform. Moreover, we present some mathematical  preliminaries for the acoustic wave equation in both time- and frequency-domain.  In section 3, we first
establish the connection between the  time-domain  and frequency-domain direct sampling method. Further, we show the behaviour of the indicator functions.
Numerical experiments are conducted in section 4 to verify the promising features of our method.

\section{Fourier-Laplace transform and forward scattering problem}

 In this section, we provide some mathematical  preliminaries for the time-domain acoustic wave equation.

 We begin with the introduction of some space-time Sobolev spaces and the Fourier-Laplace transform.
 Let $X$ be a Hilbert space, we define the space $\mathcal{D}(\mathbb{R},X)$ as the $X$-valued $C_0^{\infty}$ functions on the real line with support in $(-\infty, \infty)$ and the space $\mathcal{D}'(\mathbb{R},X)$ as the associated $X$-valued distributions.
Moreover, we define the space $\mathcal{S}(\mathbb{R},X)$ as the Schwartz space of infinitely differentiable $X$-valued functions on the real
line and the space $\mathcal{S}'(\mathbb{R},X)$ as the corresponding tempered distributions. In order to  describe the Fourier-Laplace transform, we define the following Sobolev spaces
\begin{equation*}
\mathcal{L}_{\sigma}'(\mathbb{R},X):=\{ f\in \mathcal{D}'(\mathbb{R},X): \mathrm{e}^{-\sigma t}f \in \mathcal{S}'(\mathbb{R},X) \}, \quad \sigma\in \mathbb{R}.
\end{equation*}
 It is clear that the Fourier transform of $\mathrm{e}^{-\sigma t}f$ can be represented by
\begin{equation*}
\mathcal{F}[\mathrm{e}^{-\sigma t}f](\xi)=\int_{-\infty}^{+\infty} \mathrm{e}^{\mathrm{i}\xi t} \mathrm{e}^{-\sigma t} f(t) \, \mathrm{d}t, \quad \xi\in \mathbb{R}.
\end{equation*}
Let $\omega=\xi+\mathrm{i}\sigma$ for $\xi, \sigma\in \mathbb{R}$,  we define a half-plane by
\begin{equation*}
 \mathbb{C}_{\sigma_0}=\{\omega\in \mathbb{C}: \text{Im}(\omega)\geq \sigma_0>0\}
\end{equation*}
 Thus, the Fourier transform can be rewritten as the following Fourier-Laplace transform
\begin{equation}\label{eq:Fourier-transform}
  \widehat{f}(\omega)=\int_{-\infty}^{+\infty} \mathrm{e}^{\mathrm{i}\omega t} f(t) \, \mathrm{d}t, \quad \omega\in \mathbb{C}_{\sigma_0}.
\end{equation}
Indeed, the inverse Laplace transform is given by
\begin{equation}\label{eq:Laplace-transform}
  f(t)=\frac{1}{2\pi} \int_{-\infty+\mathrm{i}\sigma}^{+\infty+\mathrm{i}\sigma} \mathrm{e}^{-\mathrm{i}\omega t} \widehat{f}(\omega) \, \mathrm{d}\omega.
\end{equation}
 To characterize the time-dependent wave field, we also define the Hilbert space
\begin{equation*}
  H_{\sigma}^{m}(\mathbb{R},X):=\left\{f \in \mathcal{L}_{\sigma}'(\mathbb{R},X): \,
   \int_{-\infty+\mathrm{i}\sigma}^{+\infty+\mathrm{i}\sigma} |w|^{2m}\|\widehat{f}(\omega)\|_X^2 \, \mathrm{d}\omega<+\infty
  \right \}
\end{equation*}
for $m\in \mathbb{R}$ and $\sigma\in \mathbb{R}$. For more details, please refer to the  reference \cite{Sayas2016}. In additon, it is worthy to mention that the  Fourier-Laplace transform transfers to the  standard  Fourier transform when  $\Im \omega =0$.

 Next, we introduce the acoustic wave propagation and scattering in the inhomogeneous medium. Throughout, we assume that the incident field
 is generated by a single monopole source  with a causal temporal signal $\chi(t)\in \mathcal{C}^2(\mathbb{R})$, that is $\chi(t)\equiv 0$ for $t\leq 0$. Then the incident wave $p^i\in H_{\sigma}^{m}(\mathbb{R}, L^2(\mathbb{R}^d))$ emitted at the source location $y\in \mathbb{R}^d$ satisfies the homogeneous wave equations
\begin{equation}\label{eq:incient-wave}
\begin{aligned}
&   c_0^{-2}\partial_{tt}p^i(x,t)-\Delta p^i(x,t)=\chi(t)\, \delta(x-y), \quad (x,t)\in \mathbb{R}^d \times \mathbb{R},\\
&   p^i(x,0)=\partial_t p^i(x,0)=0,   \qquad \qquad \qquad \qquad  \  x\in \mathbb{R}^d,
   \end{aligned}
\end{equation}
where $\delta$ is the Dirac delta function. It is known that the fundamental solution to the wave equation is given by
\begin{equation*}
 \Phi(x,t;y):=
 \begin{cases}
 \displaystyle \frac{H(t-c_0^{-1}|x-y|)}{2\pi\sqrt{t^2-c_0^{-2}|x-y|^2}}, \quad \ d=2,\medskip\\
 \displaystyle \frac{\delta(t-c_0^{-1}|x-y|)}{4\pi|x-y|}, \quad\quad \quad d=3,
 \end{cases}
 x\neq y,
\end{equation*}
 where $H(t)$ denotes the Heaviside function.
In three dimensions, it is easy to verify that the solution to \eqref{eq:incient-wave} is
\begin{equation}\label{eq:source}
  p^i(x,t;y)=\frac{\chi\left(t-c_0^{-1}|x-y|\right)}{4\pi|x-y|}, \quad (x,t)\in \mathbb{R}^3 \backslash\{y\} \times \mathbb{R}.
\end{equation}
Combining \eqref{eq:main} and \eqref{eq:incient-wave}, one can find that the scattered field $p^s\in H_{\sigma}^{m}(\mathbb{R}, H^1(\mathbb{R}^d))$ admits the following equation
\begin{equation}\label{eq:scattered-wave}
\begin{aligned}
&  c_0^{-2}(x)\partial_{tt}p^s(x,t)-\Delta p^s(x,t)=\left(c_0^{-2}-c^{-2}(x)\right)\partial_{tt}p(x,t), \quad (x,t)\in \mathbb{R}^d\backslash \{y\} \times \mathbb{R},
\end{aligned}
\end{equation}
where $p:=p^i+p^s\in H_{\sigma}^{m}(\mathbb{R}, H^1(\mathbb{R}^d))$ denotes the total field.
Noting that $c(x)-c_0$ is compactly supported in $\Omega \in\mathbb{R}^d$, using the time-domain Green's formula,  the scattered field can be represented by
\begin{equation}\label{eq:tus}
  p^s(x,t)=\int_{\mathbb{R}} \int_{\Omega}
  \left(c_0^{-2}-c^{-2}(y)\right) \partial_{tt} p(y,\tau)  \, \Phi(x,t-\tau;y)\  \mathrm{d}y \,\mathrm{d}\tau.
\end{equation}

To facilitate the theoretical analysis, we first  convert the time domain to the frequency domain.
By applying the Fourier-Laplace transform to the scattered field and total field,  i.e.,
\begin{equation*}
  \widehat{p^s}(x,\omega):=\int_{-\infty}^{+\infty} \mathrm{e}^{\mathrm{i}\omega t} p^s(x,t) \, \mathrm{d}t, \quad
   \widehat{p}(x,\omega):=\int_{-\infty}^{+\infty} \mathrm{e}^{\mathrm{i}\omega t} p(x,t) \, \mathrm{d}t,
  \quad \omega\in \mathbb{C}_{\sigma_0},
\end{equation*}
equation \eqref{eq:scattered-wave} can be rewritten as
\begin{equation*}
  \frac{\omega^2}{c_0^2} \widehat{p^s}(x,\omega)+\Delta \widehat{p^s}(x,\omega) =\omega^2 \left(c_0^{-2}-c^{-2}(x)\right)\widehat{p}(x,\omega) , \quad  (x,\omega) \in \mathbb{R}^d\backslash\{ y\} \times \mathbb{C}_{\sigma_0}.
\end{equation*}
Recalling the Lippmann-Schwinger equation \cite[p.310]{Colton2019}, one can find that the solution to the last equation is given by
\begin{equation}\label{eq:us}
  \widehat{p^s}(x,\omega)=-\int_{\Omega}   \omega^2 \left(c_0^{-2}-c^{-2}(y)\right)\widehat{p}(y,\omega) \,  \Phi_{\omega}(x,y) \, \mathrm{d}y,
\end{equation}
where
 $\Phi_{\omega}(x,y)$ denotes the fundamental solution to the Helmholtz equation with frequency $\omega\in \mathbb{C}_{\sigma_0}$ \cite{s25}
\begin{equation}\label{eq:fundamental}
  \Phi_{\omega}(x,y)=
  \begin{cases}
  &\displaystyle\frac{\mathrm{i}}{4} H_0^{(1)}(\omega c_0^{-1}|x-y|), \quad d=2, \medskip\\
  &\displaystyle\frac{\mathrm{e}^{\mathrm{i}\omega c_0^{-1}|x-y|  }  }{4\pi|x-y|}, \qquad \qquad \ \ \ d=3,
  \end{cases}
 \ x\neq y.
\end{equation}
Here $H_0^{(1)}$ denotes the  Hankel function of the first kind with order zero. Thus, $  \widehat{p^s}$  is a frequecy-domain formula to the time-domain scattered field $p^s$ defined in \eqref{eq:tus}.

\section{Theoretical analysis of the  time-domain direct sampling method}
In this section, we aim to establish the rigorous theoretical analysis to determine the locations of the multiple small scatterers based on the
behavior of indicator functions \eqref{eq:indicator}. The key technique is to transform the time-domain imaging functions into the Laplace domain and discuss the properties of  imaging functions in the frequency domain.

We  first present the Laplace-domain representation of the indicator functions in the following theorem.
\begin{theorem}\label{thm:Fourier}
 Let the indicator/imaging functions of  time-domain direct sampling methods  be defined in \eqref{eq:indicator}.  By using the Laplace transform,   imaging functions can be  represented by
 \begin{subequations}\label{eq:indicator-fre}
\begin{align}
\label{eq:indicator-2D}&\mathcal{I}^{(2)}(z) = \frac{1}{2\pi} \int_{-\infty+\mathrm{i}\sigma}^{+\infty+\mathrm{i}\sigma}  \,
 \left| \int_{\Gamma} \widehat{p^s}(x,\omega) \frac{\mathrm{e}^{-\mathrm{i}\Re(\omega) c_0^{-1}|x-z|}}{\sqrt{8\pi  c_0^{-1}|x-z|}}\,  \mathrm{d}s(x)\right|^2 \,\mathrm{d}\omega, \medskip\\
\label{eq:indicator-3D}&\displaystyle \mathcal{I}^{(3)}(z)=\frac{1}{2\pi} \int_{-\infty+\mathrm{i}\sigma}^{+\infty+\mathrm{i}\sigma}
  \left| \int_{\Gamma} \widehat{p^s}(x,\omega)\frac{\mathrm{e}^{-\mathrm{i} \Re(\omega) c_0^{-1}|x-z|}}{4\pi|x-z|} \mathrm{d}s(x)\right|^2 \,\mathrm{d}\omega,
\end{align}
\end{subequations}
for two and three dimensions, respectively.  Here  $z\in D$ denotes the sampling point and   $\omega\in \mathbb{C}_{\sigma_0}$ is the frequency.
\end{theorem}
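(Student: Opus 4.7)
The plan is to interpret the time integral in~\eqref{eq:indicator} as a squared $L^2$-norm on the real line and then transfer it to the frequency contour $\Im\omega = \sigma$ via a Plancherel identity tailored to the Fourier--Laplace transform. First I would factor the test function as $\varphi_\sigma^{(d)}(x,t,z) = \mathrm{e}^{-\sigma t}\psi^{(d)}(x,z)$, where $\psi^{(d)}$ is the remaining time-independent piece from~\eqref{eq:test}, namely $\mathrm{e}^{-\sigma c_0^{-1}|x-z|}$ divided by the appropriate geometric factor. Setting
$$h(t,z) := \int_\Gamma p^s(x,t+c_0^{-1}|x-z|)\,\psi^{(d)}(x,z)\,\mathrm{d}s(x),$$
the definition~\eqref{eq:indicator} becomes $\mathcal{I}^{(d)}(z) = \|\mathrm{e}^{-\sigma t}h(\cdot,z)\|_{L^2(\mathbb{R})}^2$.

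Next I would invoke Plancherel's theorem applied to the exponentially weighted function $\mathrm{e}^{-\sigma t}h(t,z)$. Using the definition~\eqref{eq:Fourier-transform}, the standard Fourier transform of $\mathrm{e}^{-\sigma t}h$ at the real parameter $\xi$ coincides with $\widehat{h}(\xi+\mathrm{i}\sigma)$, so Parseval yields
$$\mathcal{I}^{(d)}(z) \;=\; \frac{1}{2\pi}\int_{-\infty+\mathrm{i}\sigma}^{+\infty+\mathrm{i}\sigma}\bigl|\widehat{h}(\omega,z)\bigr|^2\,\mathrm{d}\omega.$$
It then remains to identify $\widehat{h}(\omega,z)$ with the inner integral appearing in~\eqref{eq:indicator-fre}. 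By Fubini (justified since $p^s\in H_\sigma^m(\mathbb{R},H^1(\mathbb{R}^d))$ and $\psi^{(d)}$ is bounded on $\Gamma$) together with the substitution $\tau = t + c_0^{-1}|x-z|$, one obtains
$$\int_{-\infty}^{+\infty}\mathrm{e}^{\mathrm{i}\omega t}\,p^s(x,t+c_0^{-1}|x-z|)\,\mathrm{d}t \;=\; \mathrm{e}^{-\mathrm{i}\omega c_0^{-1}|x-z|}\,\widehat{p^s}(x,\omega).$$
The key algebraic cancellation now surfaces: decomposing $\omega = \Re(\omega)+\mathrm{i}\sigma$ gives $\mathrm{e}^{-\mathrm{i}\omega c_0^{-1}|x-z|} = \mathrm{e}^{-\mathrm{i}\Re(\omega)c_0^{-1}|x-z|}\,\mathrm{e}^{\sigma c_0^{-1}|x-z|}$, and the decay factor $\mathrm{e}^{-\sigma c_0^{-1}|x-z|}$ sitting inside $\psi^{(d)}$ exactly annihilates that growth, leaving the pure oscillation $\mathrm{e}^{-\mathrm{i}\Re(\omega)c_0^{-1}|x-z|}$ that appears in~\eqref{eq:indicator-2D}--\eqref{eq:indicator-3D}. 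This also clarifies the design rationale for the exponential in the test function~\eqref{eq:test}.

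Assembling these computations produces both formulae simultaneously, the argument being dimension-agnostic apart from the explicit geometric prefactors in $\psi^{(d)}$. Rather than a serious obstacle, the only subtlety worth careful attention is verifying that $\mathrm{e}^{-\sigma t}h(\cdot,z) \in L^2(\mathbb{R})$ so that Plancherel and Fubini are rigorously applicable; this follows from the weighted Hilbert-space framework $H_\sigma^m$ introduced in Section~2, the standing constraint $\sigma \geq \sigma_0 > 0$, and the boundedness of $\psi^{(d)}$ over the measurement surface $\Gamma$ (which lies away from the sampling point $z$ in the geometry of interest).
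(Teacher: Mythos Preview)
Your proposal is correct and rests on the same mathematical identity as the paper's proof: both pass from the time integral to the line $\Im\omega=\sigma$ via the Fourier--Laplace isometry. The paper, however, carries this out by hand rather than invoking Plancherel as a black box---it substitutes the inverse Laplace representation \eqref{eq:Laplace-transform} for $p^s$ inside the time integral, then uses the conjugation identities $\overline{\mathrm{e}^{-\mathrm{i}\omega t}}=\mathrm{e}^{\mathrm{i}\omega t}\mathrm{e}^{-2\sigma t}$ and $\overline{\mathrm{e}^{\mathrm{i}\omega t}}=\mathrm{e}^{-\mathrm{i}\overline{\omega}t}$ to reassemble the modulus squared, treating $d=3$ and $d=2$ separately. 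Your route is more economical: by writing $\mathcal{I}^{(d)}(z)=\|\mathrm{e}^{-\sigma t}h(\cdot,z)\|_{L^2(\mathbb{R})}^2$ and applying Plancherel directly, you handle both dimensions at once and make the design rationale for the exponential weight in $\varphi_\sigma^{(d)}$ transparent. The cancellation you isolate between $\mathrm{e}^{\sigma c_0^{-1}|x-z|}$ (from the time shift) and $\mathrm{e}^{-\sigma c_0^{-1}|x-z|}$ (from $\psi^{(d)}$) is precisely what the paper's line-by-line algebra also produces, only there it is implicit in the manipulations.
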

\begin{proof}
We first consider the three-dimensional case. In three dimensions, the indicator function \eqref{eq:indicator} is given by
\begin{equation*}
\mathcal{I}^{(3)}(z)=\int_{-\infty}^{+\infty} \left| \int_{\Gamma} p^s(x,t+c_0^{-1}|x-z|)  \frac{\mathrm{e}^{-\sigma(t+ c_0^{-1}|x-z|)}}{4\pi|x-z|}  \, \mathrm{d}s(x)\right|^2 \mathrm{d}t.
\end{equation*}
Due to $\overline{\exp(-\mathrm{i}\omega t)}=\exp(\mathrm{i}\omega t) \exp(-2\sigma t)$ for $\omega=\xi+\mathrm{i} \sigma\in \mathbb{C}_{\sigma_0}$, using the inverse Laplace transform \eqref{eq:Laplace-transform},  one can derive that
\begin{equation}\label{eq:I3-0}
\begin{aligned}
\mathcal{I}^{(3)}(z)
  &=\int_{-\infty}^{+\infty} \Bigg(\int_{\Gamma} \Bigg\{\frac{1}{2\pi} \int_{-\infty+\mathrm{i}\sigma}^{+\infty+\mathrm{i}\sigma}  \widehat{ p^s}(x,\omega) \mathrm{e}^{-\mathrm{i}\omega(t+c_0^{-1}|x-z|)} \,\mathrm{d}\omega \Bigg\} \frac{\mathrm{e}^{-\sigma c_0^{-1}|x-z|}}{4\pi|x-z|} \, \mathrm{d}s(x) \Bigg)\\
   &\quad \times \Bigg(\mathrm{e}^{-2\sigma t} \int_{\Gamma} p^s(x,t+c_0^{-1}|x-z|) \frac{\mathrm{e}^{-\sigma c_0^{-1}|x-z|}}{4\pi|x-z|} \, \mathrm{d}s(x)\Bigg) \ \mathrm{d}t\\
  &=\frac{1}{2\pi} \int_{-\infty+\mathrm{i}\sigma}^{+\infty+\mathrm{i}\sigma}
 \Bigg( \int_{\Gamma} \widehat{p^s}(x,\omega)\frac{\mathrm{e}^{-\mathrm{i}\xi c_0^{-1}|x-z|}}{4\pi|x-z|} \mathrm{d}s(x)\Bigg)\\
  &\quad \times
 \Bigg(\int_{\Gamma} \left\{\int_{-\infty}^{+\infty}  p^s(x,t+c_0^{-1}|x-z|) \mathrm{e}^{-\mathrm{i}\omega t}\, \mathrm{e}^{-2\sigma t}   \, \mathrm{d}t \right\} \frac{\mathrm{e}^{-\sigma c_0^{-1}|x-z|}}{4\pi|x-z|} \mathrm{d}s(x) \Bigg) \,\mathrm{d}\omega.
\end{aligned}
\end{equation}
Here and in what follows the overbar stands for the complex conjugate.
Furthermore, using the fact that $\overline{\exp(\mathrm{i}\omega t)}=\exp(-\mathrm{i}\overline{\omega} t)$ and the Fourier-Laplace transform \eqref{eq:Fourier-transform},  we can deduce that
\begin{equation*}
\begin{aligned}
&\int_{\Gamma} \left\{\int_{-\infty}^{+\infty}  p^s(x,t+c_0^{-1}|x-z|) \, \mathrm{e}^{-\mathrm{i}\omega t}\, \mathrm{e}^{-2\sigma t}   \, \mathrm{d}t \right\} \frac{\mathrm{e}^{-\sigma c_0^{-1}|x-z|}}{4\pi|x-z|} \,\mathrm{d}s(x)\\
&\quad =\int_{\Gamma}\overline{ \left( \int_{-\infty}^{+\infty}  p^s(x,t+c_0^{-1}|x-z|) \, \mathrm{e}^{\mathrm{i}\omega t}\, \mathrm{d}t \right ) \frac{\mathrm{e}^{-\sigma c_0^{-1}|x-z|}}{4\pi|x-z|}}\, \mathrm{d}s(x)\\
&\quad =\int_{\Gamma} \overline{\widehat{p^s}(x,\omega) \, \mathrm{e}^{-\mathrm{i}\omega c_0^{-1}|x-z|}\, \frac{\mathrm{e}^{-\sigma c_0^{-1}|x-z|}}{4\pi|x-z|}}\, \mathrm{d}s(x)\\
&\quad =\overline{\int_{\Gamma}\widehat{p^s}(x,\omega) \, \frac{\mathrm{e}^{-\mathrm{i}\Re(\omega)  c_0^{-1}|x-z|} }{4\pi|x-z|}\, \mathrm{d}s(x)}.
\end{aligned}
\end{equation*}
Substituting the last  equation into \eqref{eq:I3-0},  it follows that
\begin{equation*}
\begin{aligned}
\mathcal{I}^{(3)}(z)
&=\frac{1}{2\pi} \int_{-\infty+\mathrm{i}\sigma}^{+\infty+\mathrm{i}\sigma}
  \left| \int_{\Gamma} \widehat{p^s}(x,\omega)\frac{\mathrm{e}^{-\mathrm{i}\Re(\omega) c_0^{-1}|x-z|}}{4\pi|x-z|} \mathrm{d}s(x)\right|^2 \,\mathrm{d}\omega.
  \end{aligned}
\end{equation*}

Next, we prove the two-dimensional case.    In the 2D case, the indicator function is given by
\begin{equation*}
  \mathcal{I}^{(2)}(z)
  =\int_{-\infty}^{+\infty} \left| \int_{\Gamma} p^s(x,t+c_0^{-1}|x-z|)  \frac{\mathrm{e}^{-\sigma (t+c_0^{-1}|x-z|)}}{\sqrt{8\pi c_0^{-1}|x-z|}}  \, \mathrm{d}s(x)\right|^2 \mathrm{d}t.
\end{equation*}
Similarly,  by a straightforward calculation, we have
\begin{equation*}
\begin{aligned}
\mathcal{I}^{(2)}(z)
  =&\int_{-\infty}^{+\infty} \Bigg(\int_{\Gamma} \Bigg\{\frac{1}{2\pi} \int_{-\infty+\mathrm{i}\sigma}^{+\infty+\mathrm{i}\sigma}  \widehat{ p^s}(x,\omega) \mathrm{e}^{-\mathrm{i}\omega(t+c_0^{-1}|x-z|)} \,\mathrm{d}\omega \Bigg\} \frac{\mathrm{e}^{-\sigma c_0^{-1}|x-z|}}{\sqrt{8\pi c_0^{-1}|x-z|}} \, \mathrm{d}s(x) \Bigg)\\
   & \times \Bigg(\mathrm{e}^{-2\sigma t} \int_{\Gamma} p^s(x,t+c_0^{-1}|x-z|) \frac{\mathrm{e}^{-\sigma c_0^{-1}|x-z|}}{\sqrt{8\pi c_0^{-1}|x-z|}} \, \mathrm{d}s(x)\Bigg) \ \mathrm{d}t\\
  =&\ \frac{1}{2\pi} \int_{-\infty+\mathrm{i}\sigma}^{+\infty+\mathrm{i}\sigma}
 \Bigg( \int_{\Gamma} \widehat{p^s}(x,\omega)\frac{\mathrm{e}^{-\mathrm{i} \Re(\omega) c_0^{-1}|x-z|} }{\sqrt{8\pi  c_0^{-1}|x-z|}} \mathrm{d}s(x)\Bigg)\\
  &\ \times
 \Bigg(\overline{  \int_{\Gamma}\left\{\int_{-\infty}^{+\infty}  p^s(x,t+c_0^{-1}|x-z|) \mathrm{e}^{-\mathrm{i}\omega t}\, \mathrm{d}t \right\} \frac{\mathrm{e}^{-\sigma c_0^{-1}|x-z|}  }{\sqrt{8\pi c_0^{-1}|x-z|}} \, \mathrm{d}s(x)} \Bigg) \,\mathrm{d}\omega\\
&= \frac{1}{2\pi} \int_{-\infty+\mathrm{i}\sigma}^{+\infty+\mathrm{i}\sigma}  \,
 \left| \int_{\Gamma} \widehat{p^s}(x,\omega) \frac{\mathrm{e}^{-\mathrm{i}\Re(\omega) c_0^{-1}|x-z|}}{\sqrt{8\pi c_0^{-1}|x-z|}}\,  \mathrm{d}s(x)\right|^2 \,\mathrm{d}\omega.
\end{aligned}
\end{equation*}
This completes the proof.
\end{proof}

Noting that the asymptotic behavior of the Hankel functions is given by
\begin{equation}\label{eq:asymp-Hankel}
  H_0^{(1)}(r)=\sqrt{\frac{2}{\pi r}}\, \mathrm{e}^{\mathrm{i}\left(r-\frac{\pi}{4}\right)} \left\{ 1+\mathcal{O}\left(\frac{1}{r}\right)\right\}, \quad  r\rightarrow \infty,
\end{equation}
thus,  the fundamental solution in 2D can be  approximated by
\begin{equation*}
  \Phi_{\xi}(x,y)
   =\frac{\mathrm{i}}{4} H_0^{(1)}(\xi c_0^{-1}|x-y|)
\approx\frac{\mathrm{e}^{\mathrm{i}\xi c_0^{-1}|x-y|}\ \mathrm{e}^{\mathrm{i}\frac{\pi}{4}}  }{\sqrt{8\pi \xi c_0^{-1}|x-y|}}, \quad r \rightarrow \infty .
\end{equation*}
Substituting  the previous equation into \eqref{eq:indicator-2D},  we obtain
\begin{equation}\label{eq:indicator2D_new}
\begin{aligned}
\mathcal{I}^{(2)}(z)
   \approx &\frac{1}{2\pi} \int_{-\infty+\mathrm{i}\sigma}^{+\infty+\mathrm{i}\sigma} \xi   \,
 \left| \int_{\Gamma} \widehat{p^s}(x,\omega)   \overline{\Phi_{\xi }(x,z)} \, \mathrm{e}^{\mathrm{i}\frac{\pi}{4}}   \, \mathrm{d}s(x)\right|^2 \,\mathrm{d}\omega\\
  =&\frac{1}{2\pi} \int_{-\infty+\mathrm{i}\sigma}^{+\infty+\mathrm{i}\sigma}\xi    \,
 \left| \int_{\Gamma} \widehat{p^s}(x,\omega)   \overline{\Phi_{\xi }(x,z)}    \, \mathrm{d}s(x)\right|^2 \,\mathrm{d}\omega, \quad r \rightarrow \infty,
\end{aligned}
\end{equation}
where $\xi=\Re(\omega)$.
Correspondingly,  the formula \eqref{eq:indicator-3D} can be rewritten as
\begin{equation}\label{eq:indicator3D_new}
\mathcal{I}^{(3)}(z)
  =\frac{1}{2\pi} \int_{-\infty+\mathrm{i}\sigma}^{+\infty+\mathrm{i}\sigma}
 \left| \int_{\Gamma} \widehat{p^s}(x,\omega)   \overline{\Phi_{\xi }(x,z)  }  \, \mathrm{d}s(x)\right|^2 \,\mathrm{d}\omega.
\end{equation}

\begin{remark}
It is noted that the frequency-domain indicator functions \cite{Ito2012} are given by
\begin{equation*}
  \mathcal{J}^{(d)}(z)=\int_{\Gamma} \widehat{p^s}(x, \xi ) \overline{\Phi}_{\xi }(x,z)\,  \mathrm{d}s(x), \quad \xi\in \mathbb{R} \backslash \{0\}, \ d=2,3,
\end{equation*}
together with  formulas \eqref{eq:indicator2D_new} and \eqref{eq:indicator3D_new}, one can find that
\begin{equation*}
  \mathcal{I}^{(d)}(z)=\frac{1}{2\pi} \int_{-\infty}^{+\infty}\xi^{3-d}   \left|
  \mathcal{J}^{(d)}(z) \right|^2 \,\mathrm{d}\xi, \quad d=2,3,
\end{equation*}
when   $\Im(\omega)=0$.
 Thus,   our proposed time-domain direct sampling method  can be  viewed as the multi-frequency frequency-domain  direct sampling method when  $\sigma=0$.
Actually,  our proposed method is essentially a generalized version  of the multi-frequency frequency-domain  direct sampling method since the parameter $\sigma$ can also  be selected as $\sigma>0$.
\end{remark}


To fully characterize the time-domain direct sampling method, as stated in Theorem \ref{thm:Fourier}, we just need to analyze the properties of the indicator functions defined in \eqref{eq:indicator-fre}.
To accomplish this, we first present a crucial lemma.
 For simplification, we define
\begin{equation}\label{eq:G}
  G^{(d)}(z;y):=
  \begin{cases}
  &\displaystyle\int_{\Gamma} \Phi_{\omega}(x,y)  \frac{\mathrm{e}^{-\mathrm{i}\Re(\omega) c_0^{-1}|x-z|}}{\sqrt{8\pi c_0^{-1}|x-z|}}\, \mathrm{d}s(x), \quad d=2,\medskip\\
  &\displaystyle \int_{\Gamma} \Phi_{\omega}(x,y)  \frac{\mathrm{e}^{-\mathrm{i}\Re(\omega) c_0^{-1}|x-z|}}{4\pi c_0^{-1}|x-z|}\, \mathrm{d}s(x), \quad \ d=3,
  \end{cases}
\end{equation}
where $z\in \mathbb{R}^d$ denotes the sampling point and  $\Phi_{\omega}(x,y)$  is the fundamental solution defined in \eqref{eq:fundamental} with frequency $\omega\in \mathbb{C}_{\sigma_0}$.

\begin{lemma}\label{lemma1}
Suppose that $c_0>0$ is the background sound speed and $\sigma>0$ is a positive constant. Assume that  $\Gamma:=\{ x\in \mathbb{R}^d\backslash \overline{\Omega}: |x|=r \}$ is a circle ($d=2$) or a sphere ($d=3$) with radius $r$.  Let $G^{(d)}$ be defined in \eqref{eq:G},
for any $z\in \mathbb{R}^d$, it holds that
 \begin{equation*}
\begin{aligned}
 &\left|G^{(2)}(z;y) \right|\leq \frac{ \mathrm{e}^{-\sigma c_0^{-1}r}}{8\sqrt{\omega}c_0^{-1}}  I_0(\sigma c_0^{-1}|y|)  \left\{ 1+\mathcal{O}\left(\frac{1}{r}\right)  \right\}, \\
 & \left| G^{(3)}(z;y) \right|
  \leq  \frac{\mathrm{e}^{-\sigma c_0^{-1}r} }{4\pi c_0^{-2}}  i_0(\sigma c_0^{-1}|y|) \left\{ 1+\mathcal{O}\left(\frac{1}{r}\right)  \right\},
  \end{aligned}
\quad r\rightarrow \infty,
    \end{equation*}
 where $I_0$ and $i_0$ are respectively the modified Bessel function and modified spherical Bessel functions of the first kind with order zero. In paricular, the equalities hold if and only if $z=y$.

Moreover,  the integrals defined in \eqref{eq:G} have the following asymptotic behaviours
 \begin{equation*}
 \begin{aligned}
 &\left|G^{(2)}(z;y) \right|= \frac{ \mathrm{e}^{-\sigma c_0^{-1}r}}{8\pi\sqrt{\omega}c_0^{-1}} \mathcal{O}\left(\frac{1}{\sqrt{\Re(\omega)|z-y|}}\right)  \left\{ 1+\mathcal{O}\left(\frac{1}{r}\right)  \right\},\\
  &\left|G^{(3)}(z;y) \right|
   = \frac{\mathrm{e}^{-\sigma c_0^{-1}r} }{ 16\pi^2 c_0^{-2}}  \mathcal{O}\left(\frac{1}{ \Re(\omega)|z-y|}\right)  \left\{ 1+\mathcal{O}\left(\frac{1}{r}\right)  \right\},
    \end{aligned}
    \end{equation*}
as   $r \rightarrow \infty$ and $\Re(\omega)|z-y| \rightarrow \infty$.

\end{lemma}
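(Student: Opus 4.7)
The plan is to perform a far-field expansion on $\Gamma$, pull out every $r$-dependent factor, and then reduce both estimates to elementary angular integrals on the unit sphere.

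First I would set $x = r\hat{x}$ with $\hat{x}\in S^{d-1}$ on $\Gamma=\{|x|=r\}$ and use the elementary expansion $|x-y|=r-\hat{x}\cdot y+\mathcal{O}(1/r)$, and similarly for $|x-z|$. Plugging this into $\Phi_\omega(x,y)$ (directly in 3D, via the Hankel asymptotic \eqref{eq:asymp-Hankel} in 2D) and into the test-function denominator of \eqref{eq:G}, and writing $\omega=\xi+\mathrm{i}\sigma$, the pair $\mathrm{e}^{\mathrm{i}\omega c_0^{-1}r}\mathrm{e}^{-\mathrm{i}\xi c_0^{-1}r}$ collapses to $\mathrm{e}^{-\sigma c_0^{-1}r}$, while $\mathrm{e}^{-\mathrm{i}\omega c_0^{-1}\hat{x}\cdot y}$ splits as $\mathrm{e}^{\sigma c_0^{-1}\hat{x}\cdot y}\mathrm{e}^{-\mathrm{i}\xi c_0^{-1}\hat{x}\cdot y}$. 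The surface element $ds(x)=r^{d-1}d\hat{x}$ cancels the $r^{-(d-1)}$ prefactor, leaving
\[
G^{(d)}(z;y)=C_d(\omega)\,\mathrm{e}^{-\sigma c_0^{-1}r}\int_{S^{d-1}}\mathrm{e}^{\sigma c_0^{-1}\hat{x}\cdot y}\,\mathrm{e}^{\mathrm{i}\xi c_0^{-1}\hat{x}\cdot(z-y)}\,d\hat{x}\,\bigl(1+\mathcal{O}(1/r)\bigr),
\]
where $C_d(\omega)$ is an explicit constant whose modulus matches the prefactor appearing in the statement.

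For the upper bound, the oscillatory factor $\mathrm{e}^{\mathrm{i}\xi c_0^{-1}\hat{x}\cdot(z-y)}$ is unimodular, so the triangle inequality reduces the estimate to bounding $\int_{S^{d-1}}\mathrm{e}^{\sigma c_0^{-1}\hat{x}\cdot y}\,d\hat{x}$. Rotating coordinates so that $y$ lies along the polar axis and computing the angular integral yields $2\pi I_0(\sigma c_0^{-1}|y|)$ in 2D and $4\pi i_0(\sigma c_0^{-1}|y|)$ in 3D, from which the stated inequalities follow after collecting constants. Equality in the triangle inequality forces the unimodular factor to be constant over $S^{d-1}$, which (for $\xi\neq 0$) is equivalent to $z=y$; conversely, when $z=y$ the oscillatory factor is identically one and the two sides agree.

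For the off-diagonal asymptotic, as $\Re(\omega)|z-y|\to\infty$ the remaining angular integral is a classical oscillatory integral with phase $\hat{x}\mapsto\hat{x}\cdot(z-y)$ on $S^{d-1}$ and smooth, strictly positive amplitude $\mathrm{e}^{\sigma c_0^{-1}\hat{x}\cdot y}$. The phase has two non-degenerate critical points at $\pm(z-y)/|z-y|$. In 2D, the method of stationary phase supplies the $\mathcal{O}\bigl(1/\sqrt{\Re(\omega)|z-y|}\bigr)$ rate (paralleling the $J_0$ asymptotic); in 3D I would align the polar axis with $z-y$, reduce to a one-dimensional integral in $\cos\theta$, and integrate by parts once to obtain the $\mathcal{O}\bigl(1/(\Re(\omega)|z-y|)\bigr)$ rate. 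Multiplying by the prefactor from the first step delivers the two asymptotic formulas.

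The main technical obstacle is decoupling the two asymptotic regimes $r\to\infty$ and $\Re(\omega)|z-y|\to\infty$: the $\mathcal{O}(1/r)$ error from the Taylor expansion of the phase carries $\omega$-dependent constants, so care is needed to ensure these remainders do not pollute the stationary-phase bound. I would address this by applying the limits sequentially---fix $\omega$ and let $r\to\infty$ to reduce to the finite spherical integral, and only then invoke stationary phase (or integration by parts) in the variable $z-y$.
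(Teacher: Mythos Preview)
Your approach is essentially the same as the paper's: the paper likewise performs the far-field expansion $|x-y|=|x|-\hat{x}\cdot y+\mathcal{O}(1/|x|)$ to reduce $G^{(d)}$ to an integral over $\mathbb{S}^{d-1}$ with the same prefactor, bounds the oscillatory factor by $1$ via the triangle inequality, evaluates $\int_{\mathbb{S}^{d-1}}\mathrm{e}^{\sigma c_0^{-1}\hat{x}\cdot y}\,d\hat{x}$ (using the Bessel/spherical-harmonic generating functions rather than your direct rotation, but to the same effect), and then invokes a stationary-phase/oscillatory-integral estimate (citing Stein) for the $\Re(\omega)|z-y|\to\infty$ decay. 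The only cosmetic differences are your direct computation of the angular integral versus the paper's series expansion, and your hands-on stationary phase/integration by parts versus the paper's citation of the general oscillatory-integral bound.
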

\begin{proof}
We first consider the two-dimensional case.  According to \eqref{eq:asymp-Hankel},  the fundamental solution has the following asymptotic behaviour
\begin{equation*}
  \Phi_{\omega}(x,y)=\frac{\mathrm{i}}{4} H_0^{(1)}(\omega c_0^{-1}|x-y|)=\frac{\mathrm{e}^{\mathrm{i}\omega c_0^{-1}|x-y|}\ \mathrm{e}^{\mathrm{i}\frac{\pi}{4}}  }{\sqrt{8\pi \omega c_0^{-1}|x-y|}}\left\{ 1+\mathcal{O}\left(\frac{1}{|x|}\right)  \right\}.
\end{equation*}
Using the asymptotic formula
\begin{equation*}
  |x-y|=|x|-\hat{x}\cdot y+\mathcal{O}\left( \frac{1}{|x|}\right),
\end{equation*}
we have
\begin{equation}\label{eq:int-2D}
\begin{aligned}
 G^{(2)}(z;y)&= \int_{\Gamma} \Phi_{\omega}(x,y)  \frac{\mathrm{e}^{-\mathrm{i}\Re(\omega) c_0^{-1}|x-z|}}{\sqrt{8\pi c_0^{-1}|x-z|}}\, \mathrm{d}s(x)\\
  &=\frac{\mathrm{e}^{\mathrm{i}\frac{\pi}{4}} }{8\pi \sqrt{\omega}c_0^{-1}}\int_{\Gamma} \frac{\mathrm{e}^{\mathrm{i}\omega c_0^{-1}|x-y|}\  }{\sqrt{|x-y|}}  \frac{\mathrm{e}^{-\mathrm{i}\Re(\omega) c_0^{-1}|x-z|}}{\sqrt{|x-z|}}\, \left\{ 1+\mathcal{O}\left(\frac{1}{|x|}\right)  \right\} \mathrm{d}s(x)\\
  &=\frac{\mathrm{e}^{\mathrm{i}\frac{\pi}{4}} }{8\pi \sqrt{\omega}c_0^{-1}}\int_{\Gamma} \frac{\mathrm{e}^{-\sigma c_0^{-1}(|x|-\hat x\cdot y)}\  }{\sqrt{|x|}}  \frac{\mathrm{e}^{\mathrm{i}\Re(\omega) c_0^{-1}\hat{x}\cdot(z-y)}}{\sqrt{|x|}}\,  \left\{ 1+\mathcal{O}\left(\frac{1}{|x|}\right)  \right\} \mathrm{d}s(x)\\
    &=\frac{\mathrm{e}^{\mathrm{i}\frac{\pi}{4}} \mathrm{e}^{-\sigma c_0^{-1}r}}{8\pi \sqrt{\omega}c_0^{-1}}\int_{\mathbb{S}^1}  \mathrm{e}^{\mathrm{i}\Re(\omega) c_0^{-1}\hat{x}\cdot(z-y)}\  \mathrm{e}^{\sigma c_0^{-1}\hat x\cdot y}\, \mathrm{d}s(\hat{x})  \left\{ 1+\mathcal{O}\left(\frac{1}{r}\right)  \right\}.
  \end{aligned}
\end{equation}
Here and in what follows, $\mathbb{S}^{d-1}:=\{x\in \mathbb{R}^d: |x|=1\}$ denotes a unit circle/sphere in $\mathbb{R}^d$, $d=2,3$.
In two dimensions,  the generating function is given by
\begin{equation*}
  \mathrm{e}^{\sigma c_0^{-1}\hat x\cdot y}=I_0(\sigma c_0^{-1}|y|)+2\sum_{n=1}^{\infty} I_n(\sigma c_0^{-1}|y|)\cos(n \theta),
\end{equation*}
where  $I_n$ is the modified Bessel function of the first kind with order $n$ and $\theta$ denotes the  angle between $\hat x$ and $y$. From the last equation, one can derive that
\begin{equation*}
  \begin{aligned}
  &\left|\int_{\mathbb{S}^1}  \mathrm{e}^{\mathrm{i}\Re(\omega) c_0^{-1}\hat{x}\cdot(z-y)}\  \mathrm{e}^{\sigma c_0^{-1}\hat x\cdot y}\, \mathrm{d}s(\hat{x})\right| \\
  &\leq\int_{\mathbb{S}^1}  \left|\mathrm{e}^{\mathrm{i}\Re(\omega) c_0^{-1}\hat{x}\cdot(z-y)}\right|\cdot  \left| \mathrm{e}^{\sigma c_0^{-1}\hat x\cdot y}\right|\, \mathrm{d}s(\hat{x})\\
  &=\int_0^{\pi}  \left( I_0(\sigma c_0^{-1}|y|)+2\sum_{n=1}^{\infty} I_n(\sigma c_0^{-1}|y|)\cos(n \theta)\right) \, \mathrm{d}\theta \\
  &=\pi I_0(\sigma c_0^{-1}|y|),
  \end{aligned}
\end{equation*}
where the equality holds if and only if $z=y$.  Thus, for any $z\in \mathbb{R}^2$,  we have
\begin{equation*}
 \left| G^{(2)}(z;y)\right|=\left| \int_{\Gamma} \Phi_{\omega}(x,y)  \frac{\mathrm{e}^{-\mathrm{i}\Re(\omega) c_0^{-1}|x-z|}}{\sqrt{8\pi c_0^{-1}|x-z|}}\, \mathrm{d}s(x)\right|\leq \frac{ \mathrm{e}^{-\sigma c_0^{-1}r}}{8\sqrt{|\omega|}c_0^{-1}}  I_0(\sigma c_0^{-1}|y|)  \left\{ 1+\mathcal{O}\left(\frac{1}{r}\right)  \right\},
\end{equation*}
where the equality holds if and only if $z=y$.

Next,  we consider the three-dimensional case. By a straightforward calculation,  we have
\begin{equation}\label{eq:int-3D}
\begin{aligned}
  G^{(3)}(z;y)&=\int_{\Gamma} \Phi_{\omega}(x,y)  \frac{\mathrm{e}^{-\mathrm{i}\Re(\omega) c_0^{-1}|x-z|}}{4\pi c_0^{-1}|x-z|}\, \mathrm{d}s(x)\\
  &=\frac{1 }{(4\pi c_0^{-1})^2}\int_{\Gamma} \frac{\mathrm{e}^{\mathrm{i}\omega c_0^{-1}|x-y|}\  }{|x-y|}  \frac{\mathrm{e}^{-\mathrm{i}\Re(\omega) c_0^{-1}|x-z|}}{|x-z|}\,  \mathrm{d}s(x)\\
    &=\frac{\mathrm{e}^{-\sigma c_0^{-1}r} }{(4\pi c_0^{-1})^2}\int_{\mathbb{S}^2}  \mathrm{e}^{\mathrm{i}\Re(\omega) c_0^{-1}\hat{x}\cdot(z-y)}\  \mathrm{e}^{\sigma c_0^{-1}\hat x\cdot y}\,  \mathrm{d}s(\hat{x})  \left\{ 1+\mathcal{O}\left(\frac{1}{r}\right)  \right\}.
  \end{aligned}
    \end{equation}
In three dimensions,    the generating function  is given by
\begin{equation*}
  \mathrm{e}^{\sigma c_0^{-1}\hat x\cdot y}=4\pi\sum_{n=0}^{\infty}\sum_{m=-n}^n i_n(\sigma c_0^{-1}|y|) Y_n^m\left(\frac{y}{|y|}\right) \overline{Y_n^m}(\hat x),
\end{equation*}
where $Y_n^m$ denote the spherical harmonics.  Due to  $Y_0^0(\hat x)=1/\sqrt{4\pi}$, one has
\begin{equation*}\label{eq:Y0}
  \int_{\mathbb{S}^2} \overline{Y_n^m}(\hat x) \, \mathrm{d}s(\hat x)= \sqrt{4\pi}\int_{\mathbb{S}^2}  Y_0^0(\hat x)  \overline{Y_n^m}(\hat x)\, \mathrm{d}s(\hat x)=
  \begin{cases}
  &  \sqrt{4\pi}, \quad  n=0,\\
  & 0, \qquad  \ \, n\geq 1.
  \end{cases}
\end{equation*}
From the previous two equations, one has
\begin{equation*}
  \begin{aligned}
  &\left|\int_{\mathbb{S}^2}  \mathrm{e}^{\mathrm{i}\Re(\omega) c_0^{-1}\hat{x}\cdot(z-y)}\  \mathrm{e}^{\sigma c_0^{-1}\hat x\cdot y}\, \mathrm{d}s(\hat{x})\right| \\
  &\leq\int_{\mathbb{S}^2}  \left|\mathrm{e}^{\mathrm{i}\Re(\omega) c_0^{-1}\hat{x}\cdot(z-y)}\right|\cdot  \left| \mathrm{e}^{\sigma c_0^{-1}\hat x\cdot y}\right|\, \mathrm{d}s(\hat{x})\\
  &=\int_{\mathbb{S}^2}  \left( 4\pi\sum_{n=0}^{\infty}\sum_{m=-n}^n i_n(\sigma c_0^{-1}|y|) Y_n^m\left(\frac{y}{|y|}\right) \overline{Y_n^m}(\hat x)\right) \, \mathrm{d}s(\hat{x})\\
  &=4\pi i_0(\sigma c_0^{-1}|y|),
  \end{aligned}
\end{equation*}
where $i_n$ are the modified  spherical Bessel functions of the first kind with order $n$. Moreover,  the equality holds if and only if $z=y$.
Therefore,  for any $z\in \mathbb{R}^3$, we obtain
\begin{equation*}
 \left| G^{(3)}(z;y)\right|= \left|\int_{\Gamma} \Phi_{\omega}(x,y)  \frac{\mathrm{e}^{-\mathrm{i}\Re(\omega) c_0^{-1}|x-z|}}{4\pi c_0^{-1}|x-z|}\, \mathrm{d}s(x)\right|
  \leq   \frac{\mathrm{e}^{-\sigma c_0^{-1}r} }{4\pi c_0^{-2}}  i_0(\sigma c_0^{-1}|y|) \left\{ 1+\mathcal{O}\left(\frac{1}{r}\right)  \right\},
    \end{equation*}
where the equality holds if and only if $z=y$.

Finally, we show the  asymptotic behaviour of the integrals $G^{(d)}(z;y)$. Recalling the asymptotic behaviour of oscillatory integrals of the first kind in \cite[proposition 6, p.344]{Stein93},
for any fixed point $y$,   one can find that
\begin{equation*}
\left|\int_{\mathbb{S}^{d-1}}  \mathrm{e}^{\mathrm{i}\Re(\omega) c_0^{-1}\hat{x}\cdot(z-y)}\  \mathrm{e}^{\sigma c_0^{-1}\hat x\cdot y}\, \mathrm{d}s(\hat{x}) \right| =\mathcal{O}\left( (\Re(\omega)|z-y|)^{-\frac{d-1}{2}}\right),\quad  \Re(\omega)|z-y|\rightarrow \infty,
\end{equation*}
where $d=2,3$ denotes two- or three-dimensional case.  Hence, substituting the previous equation into \eqref{eq:int-2D} and \eqref{eq:int-3D} respectively,  one  can obtain
 \begin{equation*}
 \begin{aligned}
 &\left| G^{(2)}(z;y)\right|= \frac{ \mathrm{e}^{-\sigma c_0^{-1}r}}{8\pi\sqrt{ \omega}c_0^{-1}} \mathcal{O}\left(\frac{1}{\sqrt{\Re(\omega)|z-y|}}\right)  \left\{ 1+\mathcal{O}\left(\frac{1}{r}\right)  \right\}, \\
  &\left|G^{(3)}(z;y)\right|
   = \frac{\mathrm{e}^{-\sigma c_0^{-1}r} }{ 16\pi^2 c_0^{-2}}  \mathcal{O}\left(\frac{1}{\Re(\omega)|z-y|}\right)  \left\{ 1+\mathcal{O}\left(\frac{1}{r}\right)  \right\},
    \end{aligned}
    \end{equation*}
as $r \rightarrow \infty$ and $\Re(\omega)|z-y|\rightarrow \infty$.

The proof is completed.
\end{proof}

Lemma \ref{lemma1} shows that the integrals $|G^{(d)}(z;y)|$ attain  maximum values if and only if $z=y$. Moreover, the integrals $|G^{(d)}(z;y)|$ rapidly decay and go to zero when $z$ is away from $y$. In what follows, we present a special case to show the properties of the integrals $G^{(3)}(z;y)$.
Using the Jacobi-Anger expansion
\begin{equation*}
  \mathrm{e}^{\mathrm{i}z \cos \theta}= \sum_{n=0}^{\infty} \mathrm{i}^n (2n+1) j_n(z) P_n(\cos \theta),
\end{equation*}
and the properties of the Legendre polynomials
\begin{equation*}
  \int_{0}^{\pi} P_n(\cos \theta) \sin \theta \, \mathrm{d}\theta=
  \begin{cases}
  & 2, \quad n=0, \medskip \\
  & 0, \quad n\geq 1,
  \end{cases}
\end{equation*}
if $(z-y)=ky,\ k\geq 0$, then one has
\begin{equation*}
\begin{aligned}
&\int_{\mathbb{S}^{2}}  \mathrm{e}^{\mathrm{i}\Re(\omega) c_0^{-1}\hat{x}\cdot(z-y)}\  \mathrm{e}^{\sigma c_0^{-1}\hat x\cdot y}\, \mathrm{d}s(\hat{x})\\
&=\int_{0}^{2\pi} \int_{0}^{\pi}  \mathrm{e}^{\mathrm{i}(\Re(\omega) c_0^{-1}|z-y|-\mathrm{i}\sigma c_0^{-1} |y|)\cos \theta} \sin \theta\,  \mathrm{d} \theta \mathrm{d} \varphi\\
&=\int_{0}^{2\pi} \int_{0}^{\pi} \sum_{n=0}^{\infty} \mathrm{i}^n (2n+1) j_n(\Re(\omega) c_0^{-1}|z-y|-\mathrm{i}\sigma c_0^{-1} |y|) P_n(\cos \theta) \sin \theta\,  \mathrm{d} \theta \mathrm{d} \varphi\\
&=4\pi j_0(\Re(\omega) c_0^{-1}|z-y|-\mathrm{i}\sigma c_0^{-1} |y|).
\end{aligned}
\end{equation*}
Therefore,  we have
\begin{equation}\label{eq:G3}
   G^{(3)}(z;y)=\frac{\mathrm{e}^{-\sigma c_0^{-1}r} }{4\pi c_0^{-2}}  j_0(\Re(\omega) c_0^{-1}|z-y|-\mathrm{i}\sigma c_0^{-1} |y|) \left\{ 1+\mathcal{O}\left(\frac{1}{r}\right)  \right\}, \ r\rightarrow \infty.
\end{equation}
On the one hand, if $z=y$, in terms of
$i_n(x)= \mathrm{i}^{-n} j_n(\mathrm{i}x)$ and $i_n(-x)=(-1)^n i_n(x)$ \cite{Abramowitz1965},
one can find that
\begin{equation*}
\begin{aligned}
   G^{(3)}(y;y)
   &=\frac{\mathrm{e}^{-\sigma c_0^{-1}r} }{4\pi c_0^{-2}}  j_0(-\mathrm{i}\sigma c_0^{-1} |y|) \left\{ 1+\mathcal{O}\left(\frac{1}{r}\right)  \right\}\\
   &=\frac{\mathrm{e}^{-\sigma c_0^{-1}r} }{4\pi c_0^{-2}}  i_0(\sigma c_0^{-1} |y|) \left\{ 1+\mathcal{O}\left(\frac{1}{r}\right)  \right\}, \quad r\rightarrow \infty.
   \end{aligned}
\end{equation*}
On the other hand, due to $j_0(z)=\sin z/z$, together with \eqref{eq:G3}, one can verify that
\begin{equation*}
    G^{(3)}(z;y)=\frac{\mathrm{e}^{-\sigma c_0^{-1}r} }{4\pi c_0^{-2}}  \mathcal{O}\left(\frac{1}{\Re(\omega)|z-y|}\right) \left\{ 1+\mathcal{O}\left(\frac{1}{r}\right)  \right\}, \quad\Re(\omega)|z-y| \rightarrow \infty.
\end{equation*}

We are now ready to present the behavior of the imaging functions \eqref{eq:indicator}, which plays an important role in determining the locations of the point-like scatterers.
Before the discussion, we give two assumptions that play important roles in the sequel. Firstly, we assume that the medium domain $\Omega$ consists of a finite number of well-separated small domains, i.e.,
\begin{equation}\label{eq:assum1}
\Omega:=\bigcup_{j=1}^N \Omega_j, \quad N\in \mathbb{N}.
\end{equation}
Secondly,  let $\Re(\omega_0):=\inf_{\omega\in \mathbb{C}_{\sigma_0}} \Re(\omega)$ be  the lowest frequency of the  incident signal $\chi(t)$ such that
\begin{equation}\label{eq:assum3}
\displaystyle \Re(\omega_0) \gg \frac{2c_0}{\min \limits_{1\leq j, j'\leq N \atop j\neq j'} \text{dist}(\Omega_j, \Omega_{j'}) },
\end{equation}
where $c_0$ is the background sound speed. In the following, we denote by $B(y_0,R)$ the circle/ball centered at $y_0$ with $R$, namely, $B(y_0,R)=\{y\in \mathbb{R}^d: |y-y_0|<R \}$. 

\begin{theorem}\label{thm:main}
Let $\Gamma:=\{ x\in \mathbb{R}^d\backslash \overline{\Omega}: |x|=r \}$ denote  a circle ($d=2$) or a sphere ($d=3$) with radius $r$,  $c_0>0$ denotes the background sound speed and $\sigma>0$ be  a positive constant.
Suppose that the assumptions \eqref{eq:assum1} and \eqref{eq:assum3}
hold, then the indicator functions $\mathcal{I}^{(d)}(z)$ described in \eqref{eq:indicator} have the following asymptotic behaviour
\begin{align}
&\label{eq:Indicator-approx}\!\!\! \mathcal{I}^{(d)}(z) \!\leq \!\mathrm{e}^{-2\sigma c_0^{-1}r} \left( M_j+\mathcal{O}\!\left(r^{-1} \right)+
\mathcal{O}\left( (\Re(\omega_0)L)^{-\frac{d-1}{2}} \right) +  \mathcal{O}(\epsilon) \right), \ z\in B(y_j, L/2),\medskip\\
&\mathcal{I}^{(d)}(z)=\mathrm{e}^{-2\sigma c_0^{-1}r} \, \mathcal{O}\left( \frac{1}{(\Re(\omega_0)L)^{d-1}} \right), \  z\in \Omega\backslash \cup_{j=1}^N B(y_j, L/2),
\end{align}
where $\mathcal{O}(\epsilon)$ denotes the discretization error and  $L:=\min\limits_{1\leq j, j'\leq N \atop j\neq j'} \rm{dist}(\Omega_j, \Omega_{j'}) $
 and $M_j$ is a positive constant, i.e.,
\begin{equation*}
M_j:=
\begin{cases}
&\displaystyle \frac{c_0^2}{128\pi}h_j^2 I_0^2(\sigma c_0^{-1} y_j) \int_{-\infty+\mathrm{i}\sigma}^{+\infty+\mathrm{i}\sigma} \left|\omega^{\frac{3}{2}} \widehat{p}(y_j,\omega)\right|^2 \,\mathrm{d}\omega, \quad d=2, \medskip \\
&\displaystyle \frac{c_0^4}{32\pi^3}h_j^2 i_0^2(\sigma c_0^{-1} y_j) \int_{-\infty+\mathrm{i}\sigma}^{+\infty+\mathrm{i}\sigma} \left|\omega^2 \widehat{p}(y_j,\omega)\right|^2 \,\mathrm{d}\omega, \quad \ \, d=3.
\end{cases}
\end{equation*}
Here the point  $y_j$ is in the $j$-th element $\Omega_j$ and $h_j:=(c^{-2}(y_j)-c_0^{-2})|\Omega_j|$ denotes the weight with $|\Omega_j|$ being the measure of the element $\Omega_j$.
In particular, the equality in
\eqref{eq:Indicator-approx} holds if and only if $z=y_j$, $j=1,2, ... , N$.
\end{theorem}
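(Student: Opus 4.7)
My plan is to propagate $\widehat{p^s}$ in the Laplace-domain representation of Theorem \ref{thm:Fourier} through its Lippmann--Schwinger formula \eqref{eq:us}, localize the resulting volume integral at the scatterer anchors using \eqref{eq:assum1}, and then invoke Lemma \ref{lemma1} term by term before integrating in $\omega$. First, I would substitute \eqref{eq:us} into \eqref{eq:indicator-2D}--\eqref{eq:indicator-3D}, swap the order of the surface and volume integrals, and split $\int_\Omega = \sum_{j=1}^N \int_{\Omega_j}$. On each small domain $\Omega_j$ a midpoint quadrature replaces $(c_0^{-2}-c^{-2}(y))\,\widehat{p}(y,\omega)$ by its value at the anchor $y_j$, producing an error $\mathcal{O}(\epsilon)$ controlled by $\mathrm{diam}(\Omega_j)$. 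The net outcome is the clean decomposition
\begin{equation*}
\int_\Gamma \widehat{p^s}(x,\omega)\,\bigl(\text{test kernel}\bigr)\,\mathrm{d}s(x) \;=\; -\sum_{j=1}^N \omega^2\,h_j\,\widehat{p}(y_j,\omega)\,G^{(d)}(z;y_j)\;+\;\mathcal{O}(\epsilon),
\end{equation*}
with $G^{(d)}$ exactly the integral analyzed in Lemma \ref{lemma1}.

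For $z\in B(y_j,L/2)$, I would isolate the $j$-th summand as the principal contribution; the separation hypothesis \eqref{eq:assum3} ensures $|z-y_{j'}|\geq L/2$ for every $j'\neq j$. Lemma \ref{lemma1} then supplies matching estimates: $|G^{(d)}(z;y_j)|$ is bounded by the modified-Bessel expression with equality if and only if $z=y_j$, while $|G^{(d)}(z;y_{j'})|=\mathcal{O}\bigl((\Re(\omega)L)^{-(d-1)/2}\bigr)\,\mathrm{e}^{-\sigma c_0^{-1}r}$ for $j'\neq j$. Squaring the sum and integrating along $\Im\omega=\sigma$, the diagonal term reproduces exactly $M_j$: the factor $|\omega^2|^2$ from the Lippmann--Schwinger weight combines with the $|\omega|^{-1}$ of the 2D bound, respectively the $\omega$-independent 3D bound, to give the advertised weights $|\omega^{3/2}|^2$ and $|\omega^2|^2$, and the Bessel factor yields $I_0^2$ or $i_0^2$. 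Cross terms, off-diagonal summands, the $\mathcal{O}(1/r)$ remainder from Lemma \ref{lemma1}, and the quadrature residue assemble into $\mathcal{O}(r^{-1})+\mathcal{O}((\Re(\omega_0)L)^{-(d-1)/2})+\mathcal{O}(\epsilon)$; the strict equality at $z=y_j$ is inherited from the equality clause in Lemma \ref{lemma1}. For $z\in D\setminus\bigcup_j B(y_j,L/2)$, every $|z-y_j|\geq L/2$, so the oscillatory-integral decay of Lemma \ref{lemma1} applies to all $N$ summands, and squaring produces the $(\Re(\omega_0)L)^{-(d-1)}$ rate.

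The main obstacle I anticipate is uniformity in $\omega$ along the line $\Im\omega=\sigma$: Lemma \ref{lemma1} delivers pointwise asymptotics, but the proof requires the hidden constants to be integrable against $|\omega|^{2k}|\widehat{p}(y_j,\omega)|^2$ over the entire contour. This forces a careful use of the weighted Sobolev regularity $\widehat{p}(y_j,\cdot)\in H^m_\sigma(\mathbb{R})$ recorded in Section~2, together with a dominated-convergence argument that moves the large-$r$ and large-$\Re(\omega)|z-y|$ asymptotics past the $\omega$-integral; in particular, the $\mathcal{O}(1/r)$ remainder must be shown to be uniform in $\omega$ before it can be factored out of the integrand.
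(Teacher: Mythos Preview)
Your proposal is correct and follows essentially the same route as the paper: substitute the Lippmann--Schwinger representation \eqref{eq:us} into the Laplace-domain formulas of Theorem~\ref{thm:Fourier}, discretize the volume integral over $\Omega=\bigcup_j\Omega_j$ by a one-point quadrature with error $\mathcal{O}(\epsilon)$, and then apply Lemma~\ref{lemma1} term by term to the resulting sum $\sum_j \omega^2 h_j\,\widehat{p}(y_j,\omega)\,G^{(d)}(z;y_j)$, separating the near term $j$ from the far terms $j'\neq j$ exactly as you describe. Your bookkeeping of the $\omega$-powers in the 2D versus 3D constants $M_j$ matches the paper's, and the equality clause is inherited from Lemma~\ref{lemma1} just as you say. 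The uniformity-in-$\omega$ issue you flag is real but is handled in the paper at the same informal level you outline (invoking $p\in H^m_\sigma(\mathbb{R},H^1)$ so that the relevant weighted integrals converge), so you are not missing any additional ingredient.
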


\begin{proof}
Without loss of generality, we just consider the three dimensional case.
By using the retangular quadrature rule and  formula \eqref{eq:us}, one can deduce that
\begin{equation}\label{eq:approx}
\begin{aligned}
 &\int_{\Gamma} \widehat{p^s}(x,\omega)  \frac{\mathrm{e}^{-\mathrm{i} \Re(\omega) c_0^{-1}|x-z|}}{4\pi|x-z|} \, \mathrm{d}s(x)\\
 &\qquad = -\int_{\Omega}\left(\omega^2 \left(c_0^{-2}-c^{-2}(y)\right) \widehat{p}(y,\omega)\int_{\Gamma} \Phi_{\omega}(x,y) \frac{\mathrm{e}^{-\mathrm{i} \Re(\omega) c_0^{-1}|x-z|}}{4\pi|x-z|}\, \mathrm{d}s(x) \right)  \mathrm{d}y\\
   &\qquad= \sum_{j=1}^N \left( h_j \omega^2 \widehat{p}(y_j,\omega) \int_{\Gamma} \Phi_{\omega}(x,y_j) \frac{\mathrm{e}^{-\mathrm{i} \Re(\omega) c_0^{-1}|x-z|}}{4\pi|x-z|}\, \mathrm{d}s(x)\right)\Big\{1+\mathcal{O}(\epsilon)\Big\},
  \end{aligned}
\end{equation}
where $h_j:=(c^{-2}(y_j)-c_0^{-2})|\Omega_j|$ denotes the weight and $\mathcal{O}(\epsilon)$ denotes the  discretization error.

If $z\in B(y_j, L/2)$, using the equation \eqref{eq:approx} and lemma \ref{lemma1}, one can derive that
\begin{equation*}
\begin{aligned}
 \mathcal{I}^{(3)}(z)
 &=\frac{1}{2\pi} \int_{-\infty+\mathrm{i}\sigma}^{+\infty+\mathrm{i}\sigma}
  \left| \int_{\Gamma} \widehat{p^s}(x,\omega)  \frac{\mathrm{e}^{-\mathrm{i} \Re(\omega) c_0^{-1}|x-z|}}{4\pi|x-z|} \mathrm{d}s(x)\right|^2 \,\mathrm{d}\omega\\
 &=\frac{1}{2\pi} \int_{-\infty+\mathrm{i}\sigma}^{+\infty+\mathrm{i}\sigma}
\Bigg| h_j \omega^2 \widehat{p}(y_j,\omega) \int_{\Gamma} \Phi_{\omega}(x,y_j) \frac{\mathrm{e}^{-\mathrm{i} \Re(\omega) c_0^{-1}|x-z|}}{4\pi|x-z|}\, \mathrm{d}s(x)\\
&\quad+ \sum_{\substack{j'=1\\
j'\neq j}}^N \left( h_{j'} \omega^2 \widehat{p}(y_{j'},\omega) \int_{\Gamma} \Phi_{\omega}(x,y_{j'}) \frac{\mathrm{e}^{-\mathrm{i} \Re(\omega) c_0^{-1}|x-z|}}{4\pi|x-z|}\, \mathrm{d}s(x) \right)\Bigg|^2 \mathrm{d}\omega \cdot \Big\{1+\mathcal{O}(\epsilon)\Big\}\\
&\leq\frac{1}{2\pi}\int_{-\infty+\mathrm{i}\sigma}^{+\infty+\mathrm{i}\sigma}  \Bigg|h_j \omega^2 \widehat{p}(y_j,\omega) \frac{\mathrm{e}^{-\sigma c_0^{-1}r}}{4\pi c_0^{-2}} i_0(\sigma c_0^{-1} y_j)  \left\{1+\mathcal{O}\left( \frac{1}{r}\right)\right\}   \\
& \quad + \sum_{\substack{j'=1\\
j'\neq j}}^N \left( h_{j'} \omega^2 \widehat{p}(y_{j'},\omega) \frac{\mathrm{e}^{-\sigma c_0^{-1}r} }{ 16\pi^2 c_0^{-2}}  \mathcal{O}\left(\frac{1}{ \Re(\omega)|z-y|}\right)  \left\{ 1+\mathcal{O}\left(\frac{1}{r}\right)  \right\}\right)\Bigg|^2 \mathrm{d}\omega \cdot \Big\{1+\mathcal{O}(\epsilon)\Big\}.
 \end{aligned}
\end{equation*}
Let
\begin{equation*}
  M_j:=\frac{1}{2\pi}\int_{-\infty+\mathrm{i}\sigma}^{+\infty+\mathrm{i}\sigma}  \Bigg|h_j \omega^2 \widehat{p}(y_j,\omega) \frac{1}{4\pi c_0^{-2}} i_0(\sigma c_0^{-1} y_j) \Bigg|^2 \mathrm{d}\omega,
\end{equation*}
due to $p(x,t)\in H_{\sigma}^{m}(\mathbb{R}, H^1(\mathbb{R}^d))$ and $y_j$ is a fixed point,  one can find that $M_j$  is a positive  constant and it is given by
\begin{equation*}
M_j= \frac{c_0^4}{32\pi^3}h_j^2 i_0^2(\sigma c_0^{-1} y_j) \int_{-\infty+\mathrm{i}\sigma}^{+\infty+\mathrm{i}\sigma} \left|\omega^2 \widehat{p}(y_j,\omega)\right|^2 \,\mathrm{d}\omega.
\end{equation*}
Furthermore, by a straightforward calculation,  we obtain
\begin{equation*}
   \mathcal{I}^{(3)}(z)\leq \mathrm{e}^{-2\sigma c_0^{-1}r}   \left\{M_j +\mathcal{O}\left( \frac{1}{r}\right)+
\mathcal{O}\left( \frac{1}{\Re(\omega_0)L}\right)+  \mathcal{O}(\epsilon) \right\}, \quad  z\in B(y_j, L/2).
\end{equation*}
By  lemma  \ref{lemma1},  it is clear to see that the above equality holds if and only if  $z=y_j$.

Correspondingly, if $z\in \Omega\backslash \cup_{j=1}^N B(y_j, L/2)$, from \eqref{eq:approx} and lemma \ref{lemma1}, and using a similar calculation, one can get
\begin{equation*}
\mathcal{I}^{(3)}(z)=\mathrm{e}^{-2\sigma c_0^{-1}r}  \mathcal{O}\left( \frac{1}{(\Re(\omega_0)L)^2}\right).
  \end{equation*}
This completes the proof.

\end{proof}

\begin{remark}
Theorem \ref{thm:main} asserts that the imaging functional \eqref{eq:indicator} attains its local maximum at the  location of the targets and  it decays rapidly when the sampling point is away from the scatterers. Thus, one can identify the point-like scatterers based on the behaviour of the imaging functionals. Furthermore, to improve the resolution of the imaging,  one should choose a higher lowest frequency $\Re(\omega_0)$, a larger observation radius $r$ and finer sampling grid.
\end{remark}

\begin{remark}\label{rem:sigma}
It is worthy to emphasize that the parameter $\sigma$  plays a significant role in the imaging functionals. On the one hand, if the $L^2$ norm of the incident wave is unbounded,  a sufficiently large value of $\sigma$ must be chosen to ensure that the imaging functional \eqref{eq:indicator} remains bounded for any sampling points located away from the measured surface.  In particular, we can set $\sigma=0$ when the incident wave is a Gaussian-modulated  pulse wave.
On the other hand, there usually exists some measured noise in the measurement data. Therefore,  we must select a small enough value $\sigma$ to ensure that the maximum value of  the imaging functional is not annihilated by  the measured noise.  Hence, it is important to select  an appropriate $\sigma$ for  the inversion and imaging. For a specific model, the optimal choice of such $\sigma$ is still open.

\end{remark}

\section{Numerical experiments}

 In this section, several two dimensional numerical examples are presented to illustrate the feasibility and the effectiveness of the proposed time-domain direct sampling methods.

To generate synthetic scattered field data, we use the finite element method to solve the forward problem  of \eqref{eq:main} and \eqref{eq:source} in the time domain. Here, the quadratic finite element discretization in space and Crank-Nicolson scheme in time are employed, and the unbounded exterior domain is truncated by an absorbing boundary condition. The mesh of the forward solver is gradually refined until the relative error of the measured data is below 0.1\%. To test the stability of the proposed method, random noise was added to the synthetic scattered data $p^s(x, t)$. The noisy data are given by the following formula
\begin{equation*}
  p_{\delta}^s:=p^s(1+\delta R),
\end{equation*}
where $R$ are point-wise uniform random numbers, ranging from $-1$ to $1$, and $\delta>0$ represents the noise level. Unless otherwise specified, 10\% noise was added to the synthetic data for the inversion.

Noting that the amplitude of the signal gradually decreases according to the proposed imaging functions\eqref{eq:indicator}, therefore, in practical computations, we only need to utilize measurement data within a finite time. Thus, the imaging functionals \eqref{eq:indicator} in 2D can be truncated by
\begin{equation*}
  \mathcal{I}^{(2)}_{\delta}(z)=\int_{0}^{T} \left| \int_{\Gamma} p^s_{\delta}(x,t+c_0^{-1}|x-z|)\,  \varphi_{\sigma}^{(2)}(x,t,z)  \, \mathrm{d}s(x)\right|^2 \mathrm{d}t, \quad z\in D,
\end{equation*}
where $T>0$ is the terminal time. In what follows, we  use this truncated imaging functional to identify the unknown targets.
Next, we specify discretization details of the above imaging functionals.
Suppose that $N_m$ receivers are equidistantly distributed on the boundary $\Gamma$, i.e., $x_m \in \Gamma$, $m=1,2,..., N_m$. Meanwhile, the observation period is divided into $N_t$ equidistant recording time instants, that is,  $t_n \in(0,\, T]$, $n=1,2,...,N_t$. 
For simplification, the distance between two  adjacent  receivers and the time step are set to be $\Delta_{\Gamma}$ and $ \Delta_T$, respectively. Furthermore, we assume that the sampling domain $D$ is enclosed by the measurement surface $\Gamma$ such that $\Omega \subset D$. In what follows,  we use
a uniformly distributed $N_h \times N_h$ and $N_h\times N_h\times N_h$  sampling mesh $\mathcal{T}_h$ over the sampling domain $D$ for two and three dimensions, respectively. Hence, for each sampling point $z_{\ell}\in \mathcal{T}_h$, $\ell=1,2, ... , N_h^d$,  we compute the discretized imaging functions  with a single source \eqref{eq:indicator} by
\begin{equation*}
  \mathcal{I}_{\delta}^{(2)}(z_{\ell})=\Delta_T \sum_{n=1}^{N_t}\left( \Delta_{\Gamma}\sum_{m=1}^{N_m}  p_{\delta}^s(x_m, t_n+c_0^{-1}|x_m-z_{\ell}|)  \,  \varphi_{\sigma}^{(2)}(x_m,t_n,z_{\ell}) \right)^2,
\end{equation*}
where the test function $\varphi_{\sigma}^{(d)}(x_m,t_n,z_{\ell})$ is the discretized form of \eqref{eq:test}.
For the sake of comparison, the imaging functions are scaled to the range $[0,\, 1]$ as follows
\begin{equation}\label{eq:indicator-normal}
  \widetilde{\mathcal{I}}_{\delta}^{(2)}(z_{\ell})=
  \frac{\mathcal{I}^{(2)}(z_{\ell})}{\max\limits_{z_{\ell} \in \mathcal{T}_h} \mathcal{I}^{(2)}(z_{\ell})}.
\end{equation}
In what follows, time step is chosen as $\Delta_T=0.02$s and the background speed is set to be $c_0=4 $m/s. In addition, a $60 \times 60$ sampling grid is used in the following numerical experiments.

%

\subsection{Reconstruction of point-like scatterers with a single source}

 \begin{figure}
 \begin{minipage}{1\textwidth}
\subfigure[geometry setting]{\includegraphics[width=0.32\textwidth]
                   {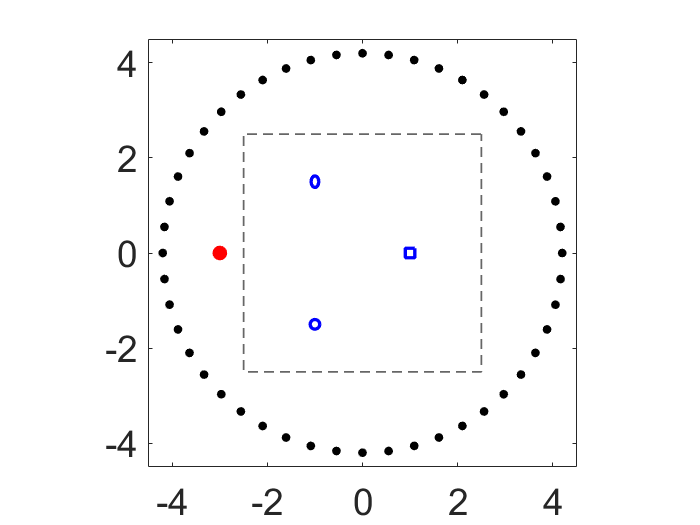}}
\subfigure[Gaussian pulse wave]{\includegraphics[width=0.32\textwidth]
                   {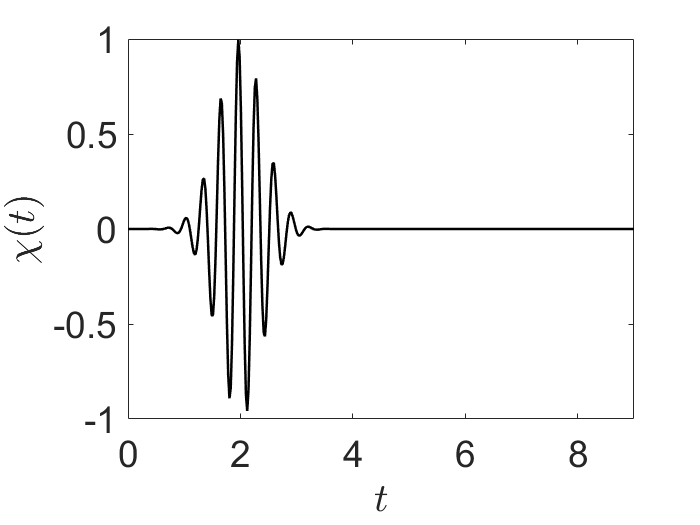}}
\subfigure[Fourier spectrum ]{\includegraphics[width=0.32\textwidth]
                   {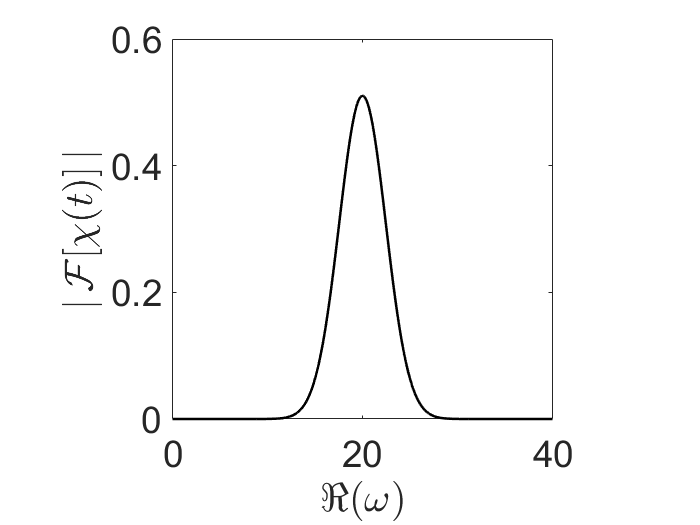}}\\
\caption{\label{fig:point-geometry} Schematic illustration of the time-domain acoustic scattering problem in two dimensions.
(a) Geometry setting of the problem; (b) plotting of Gaussian-modulated sinusoidal pulse wave with $\omega_0=20$, (c) plotting of the spectrum of the corresponding Gaussian-modulated sinusoidal pulse wave via the Fourier transform. \vspace{1cm}
  }
\end{minipage}
\begin{minipage}{1\textwidth}
\subfigure[$\omega_0=5$]{\includegraphics[width=0.32\textwidth]
                   {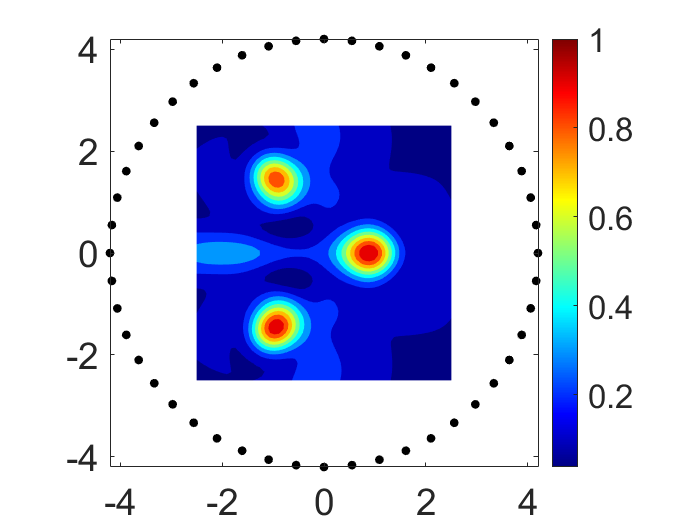}}
\subfigure[$\omega_0=10$]{\includegraphics[width=0.32\textwidth]
                   {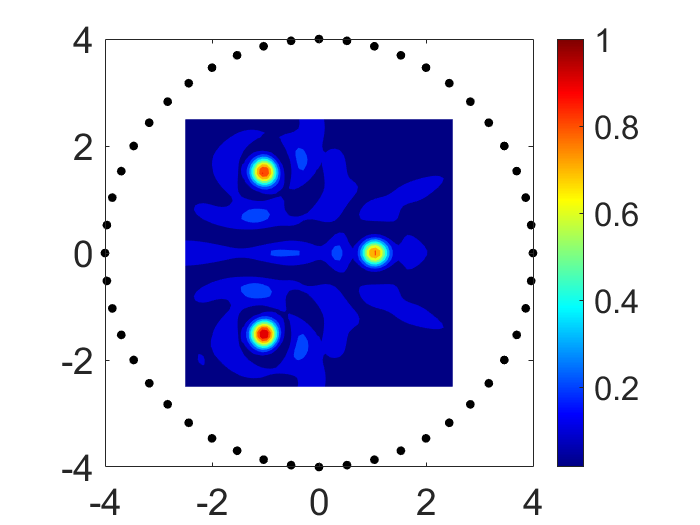}}
\subfigure[$\omega_0=20$]{\includegraphics[width=0.32\textwidth]
                   {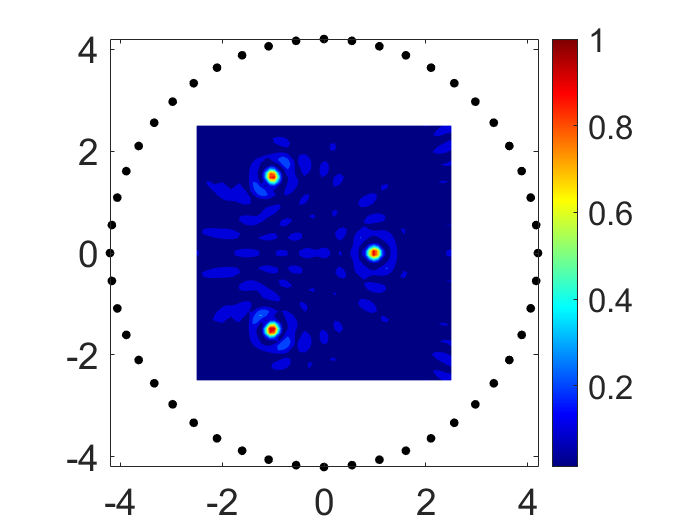}}\\
\subfigure[$\displaystyle \theta\in \left(\frac{\pi}{4}, \frac{7\pi}{4} \right)$]{\includegraphics[width=0.32\textwidth]
                   {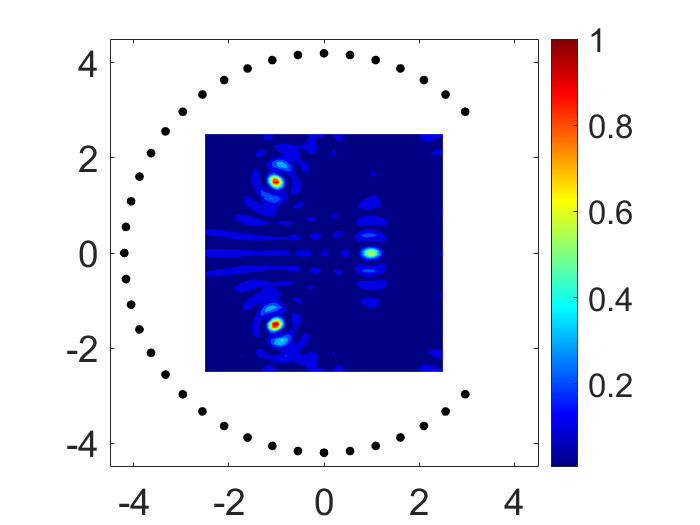}}
\hfill\subfigure[$\displaystyle \theta\in \left(\frac{\pi}{2}, \frac{3\pi}{2} \right)$]{\includegraphics[width=0.32\textwidth]
                   {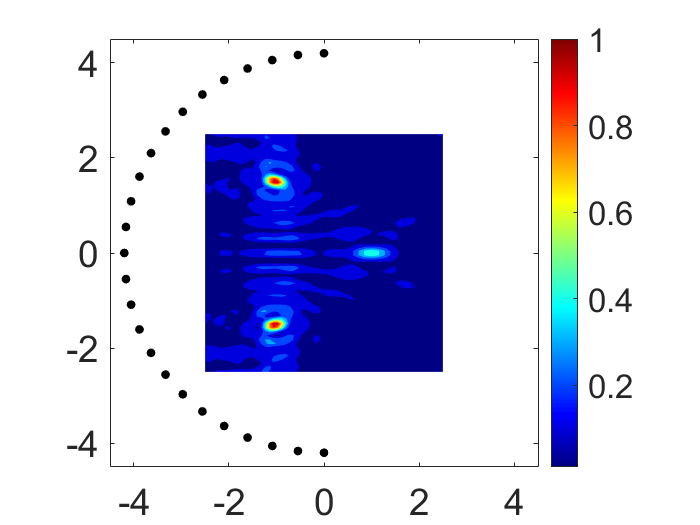}}\hfill
\hfill\subfigure[$\displaystyle \theta\in \left(\frac{3\pi}{4}, \frac{5\pi}{4} \right)$]{\includegraphics[width=0.32\textwidth]
                   {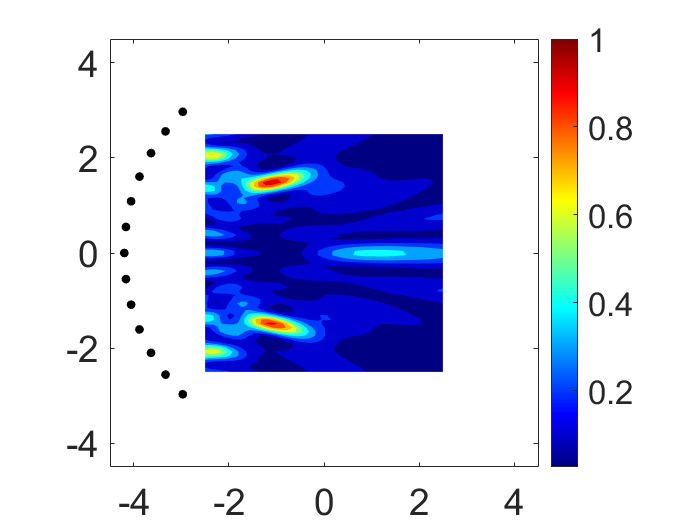}}\hfill\\
\caption{\label{fig:point} Contour plots of the imaging functional $\widetilde{\mathcal{I}}^{(2)}(z_{\ell})$ defined in \eqref{eq:indicator-normal}  by using the Gaussian-modulated sinusoidal pulse wave $\chi_1$ as the incident signal.
Top row: Reconstructions with different center frequencies $\omega_0$;
bottom row: reconstructions with different observation aperture $\theta$. }
\end{minipage}
\end{figure}

In this part, we aim to reconstruct multiple point-like scatterers by a single monopole source with different incident signals.
In what follows, we consider three kinds of  causal temporal signals as the incident source, that is, Gaussian-modulated sinusoidal pulse wave, smooth sawtooth wave and tempered sinusoidal wave.

We assume that the single source is located at $(-3,0)$ and $48$ receivers are distributed on the observation circle with radius $r=4.2$. Moreover, the sampling domain is chosen as $D=[-2.5,2.5]\times[-2.5,2.5]$.  Figure \ref{fig:point-geometry}(a) presents the two-dimensional geometrical setting of  the time-domain acoustic scattering problem,
where the location of single incident source is marked by the red point,
 the locations of receivers are marked by the black points,
 the  point-like scatterers are plotted as the blue curves and the sampling domain is marked by the dotted square.

Firstly, we consider the Gaussian-modulated sinusoidal pulse wave as the incident signal, which is given by
 \begin{equation*}
 \chi_1(t):=
   \begin{cases}
   \displaystyle \sin(\omega_0 t) \, \exp\left(-3(t-2)^2 \right) , & t\geq 0, \medskip \\
   0,  & t<0,
   \end{cases}
 \end{equation*}
 where $\omega_0\in \mathbb{R}_+$ denotes the center frequency. Figure \ref{fig:point-geometry} (b) and (c) respectively show the waveform and wavenumeber spectrum of the above Gaussian-modulated sinusoidal pulse wave with $\omega_0=20$. Here the  terminal time is given by $T=6$s and $450$ recording time steps are utilized. Assume that the true scatterers consists of a disk with radius $0.1$ located at $(-1,-1.5)$, a square with sidelength $0.1$ located at $(1, 0)$, and an elliptic with center $(-1,1.5)$, semi-major axis $0.12$ and semi-minor axis $0.08$. The sound speed $c(x)$ inside the disk, square and elliptic are $15$, $30$ and $10$, respectively.  Noting that the incident wave is  a  Gaussian-modulated sinusoidal pulse wave,   as discussed in remark \ref{rem:sigma}, we can  set $\sigma=0$ in this example.

\begin{figure}
\hfill\subfigure[geometry setting]{\includegraphics[width=0.33\textwidth]
                   {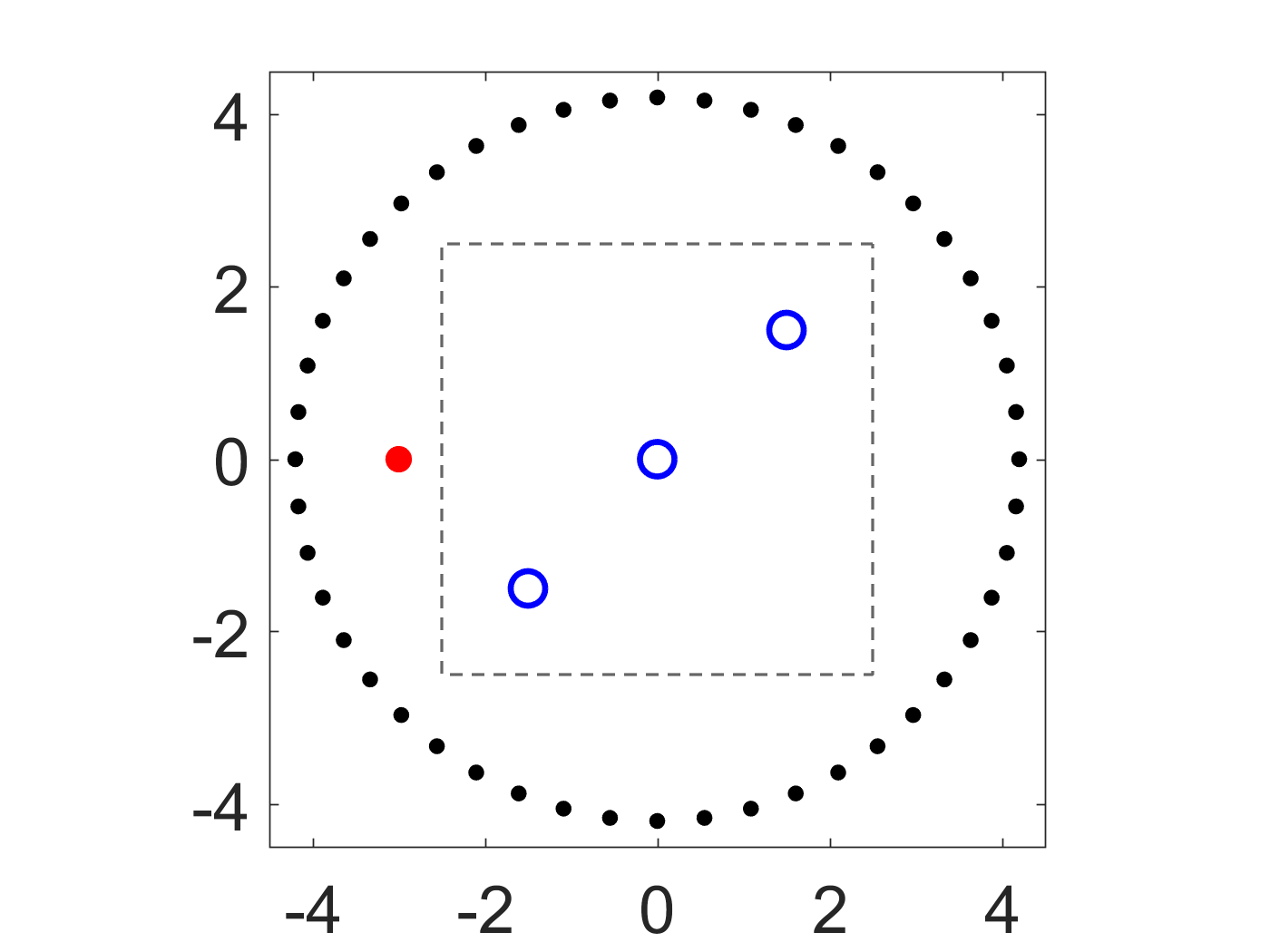}}\hfill
\hfill\subfigure[smooth sawtooth wave]{\includegraphics[width=0.33\textwidth]
                   {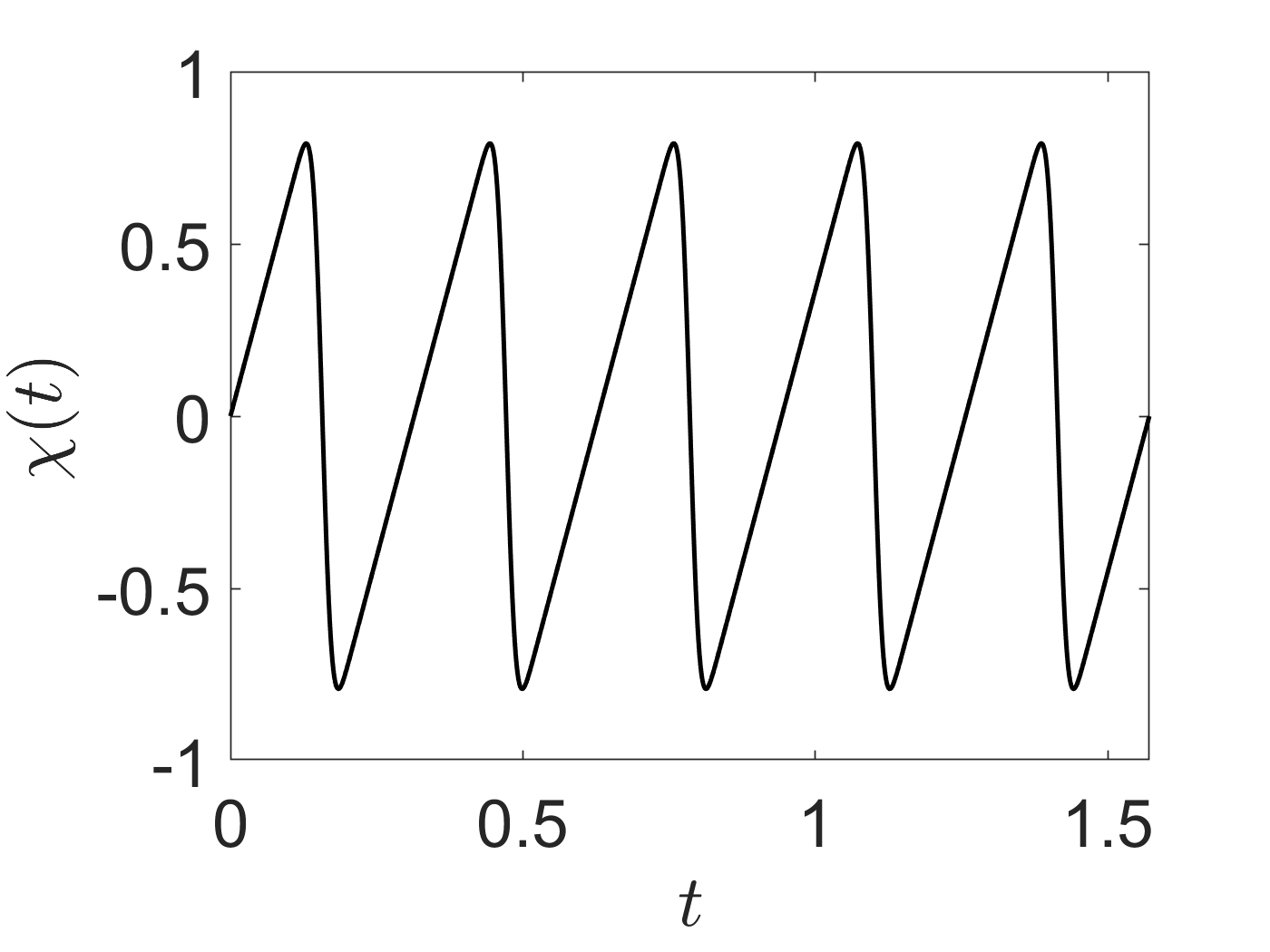}}\hfill
\hfill\subfigure[$T=1$]{\includegraphics[width=0.33\textwidth]
                   {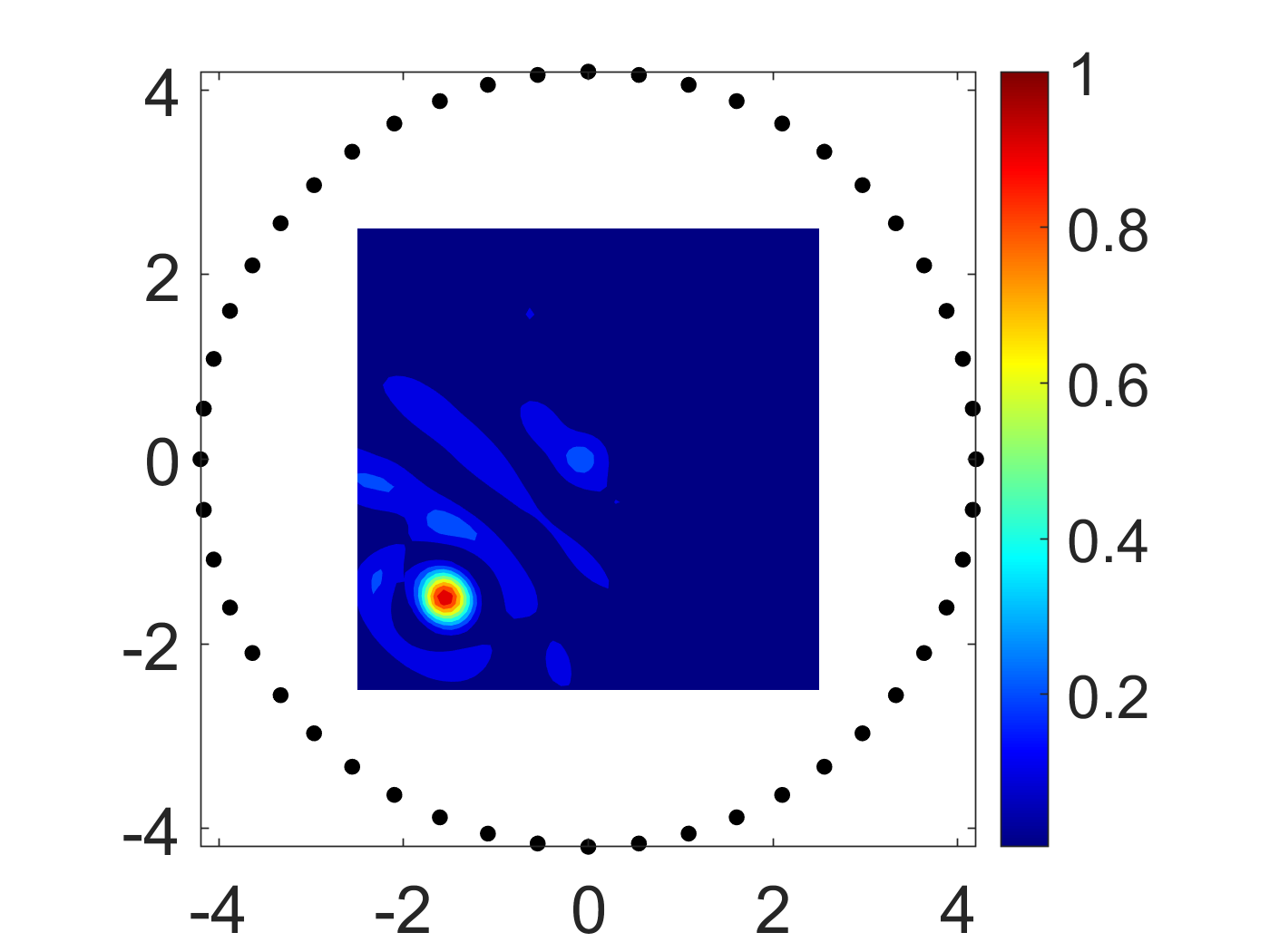}}\hfill\\
\hfill\subfigure[$T=2$]{\includegraphics[width=0.33\textwidth]
                   {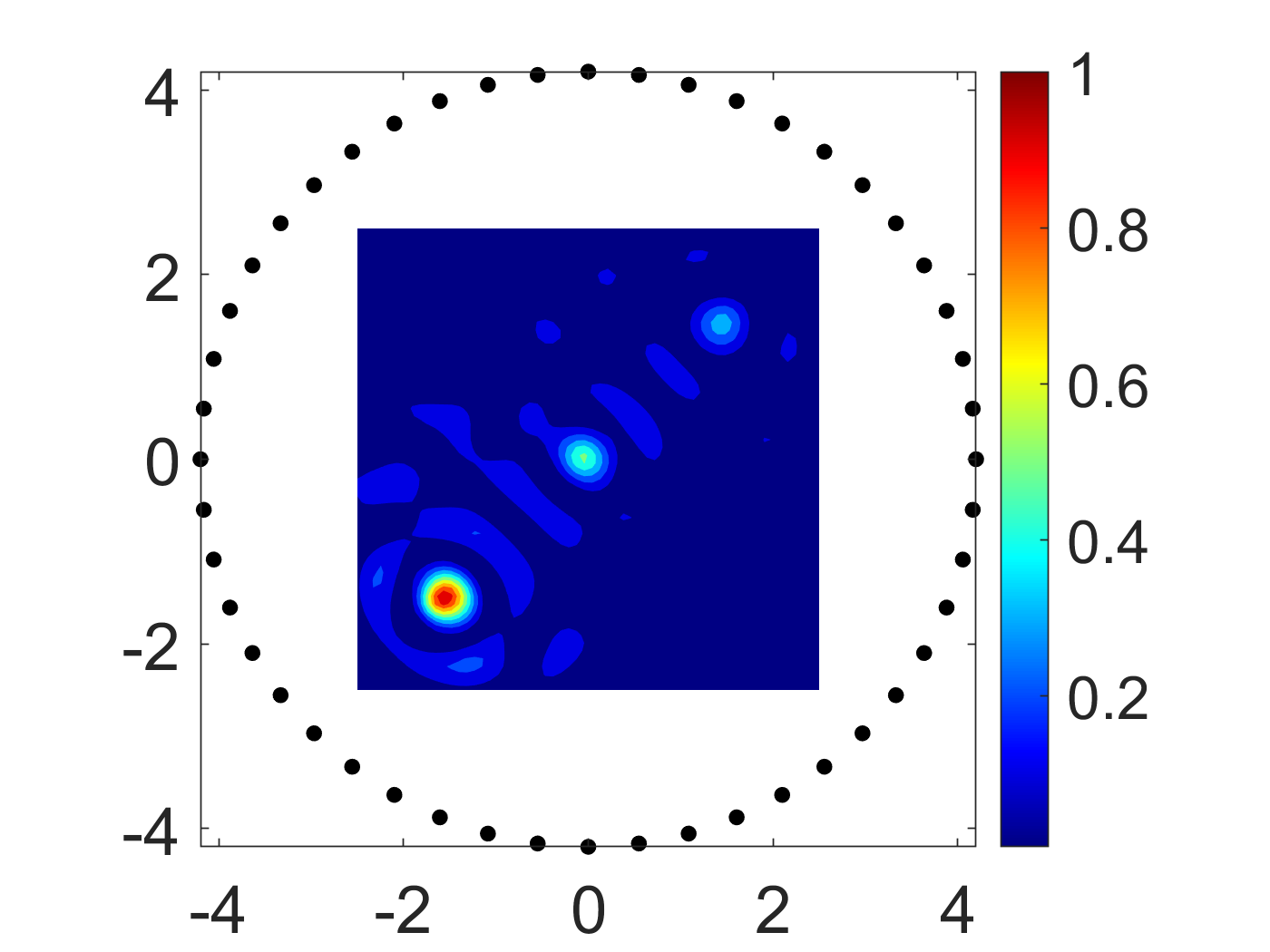}}\hfill
\hfill\subfigure[$T=4$]{\includegraphics[width=0.33\textwidth]
                   {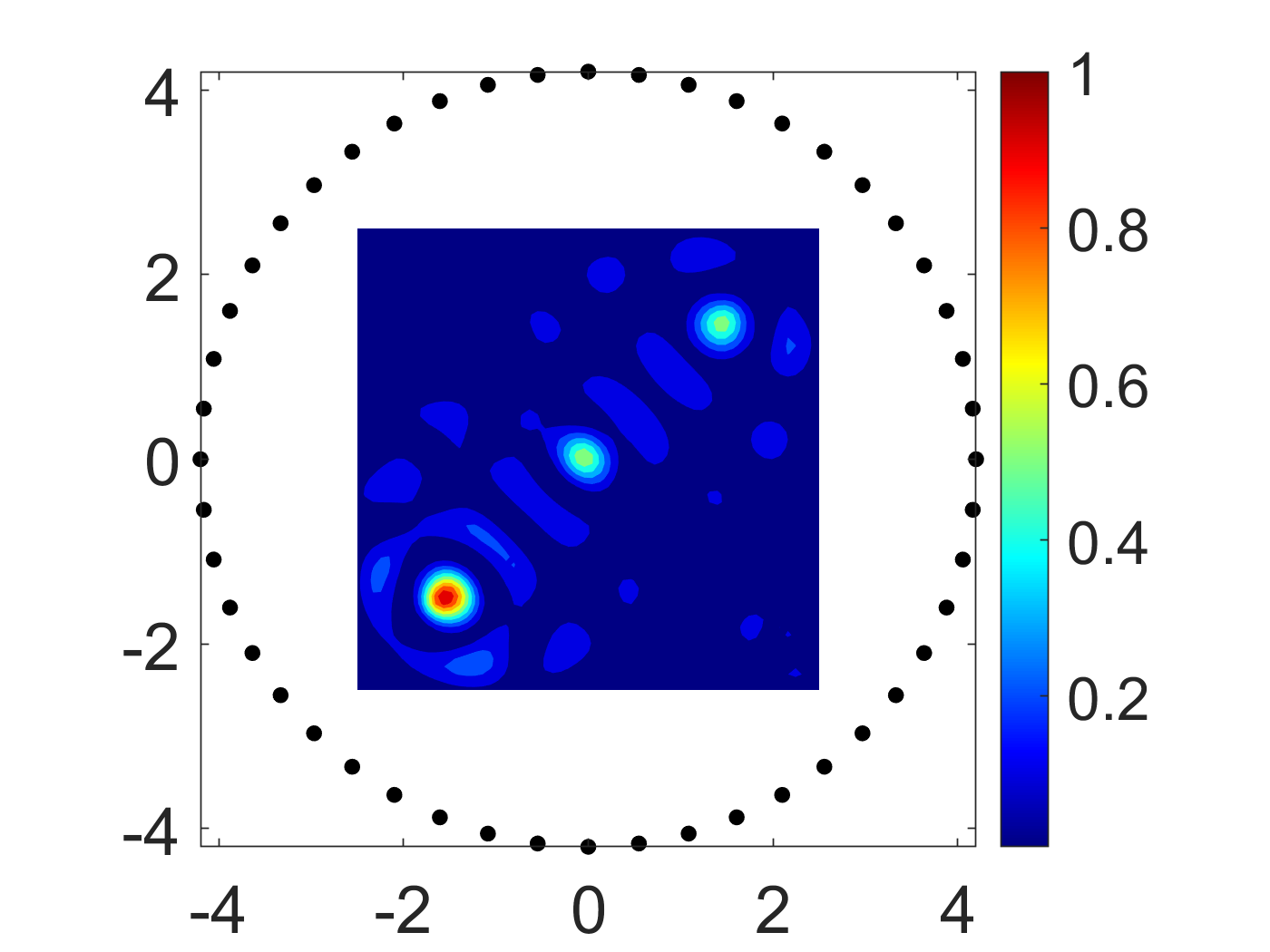}}\hfill
\hfill\subfigure[$T=6$]{\includegraphics[width=0.33\textwidth]
                   {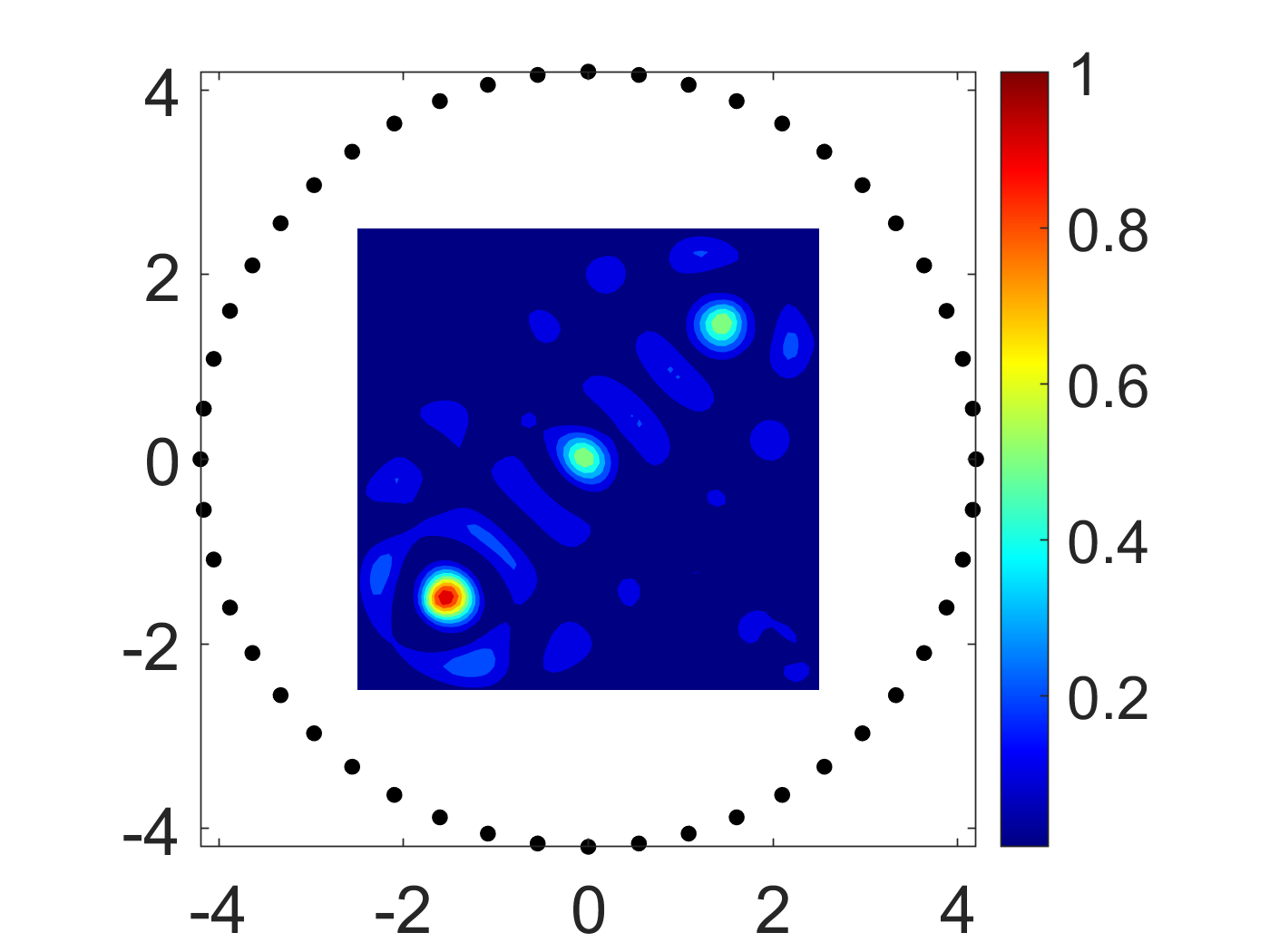}}\hfill
\caption{\label{fig:sawtooth} Contour plots of the imaging functional $\widetilde{\mathcal{I}}^{(2)}(z_{\ell})$ defined in \eqref{eq:indicator-normal}  by using the smooth sawtooth wave $\chi_2$ as the incident signal. (a): Geometry setting of the problem; (b) plotting of the smooth sawtooth wave; (c)-(f): reconstructions  with different
terminal times $T$.
 }
\end{figure}

Figure \ref{fig:point} presents the contour plots of the imaging functional $\widetilde{\mathcal{I}}_{\delta}^{(2)}(z)$ by using the Gaussian pulse as the incident wave with different center frequencies.
It can be seen that the the imaging functional attains a significant local maximum near the exact locations of the point-like scatterers. In comparison of the imaging results among figure \ref{fig:point}(a)-(c), one can observe that the resolution of the reconstruction is significant improved as $\omega_0$ increases. Furthermore, from figure \ref{fig:point}(c)-(f), we can find that the point-like scatterers is less precisely captured as the aperture decreases.

Secondly, we consider a smooth sawtooth wave as the incident wave, which is parameterized as
 \begin{equation*}
   \chi_2(t):=
   \begin{cases}
   \displaystyle \int_{-\infty}^{+\infty}\!\!
      \sqrt{\frac{3000}{\pi}}\! \left( \frac{20 \tau+\pi}{2\pi}- \! \left\lfloor\frac{20\tau+\pi}{2 \pi}\right \rfloor \!-\frac{1}{2} \right)\! \exp\! \left(-3000(t-\tau)^2\right) \, \mathrm{d}\tau, & t\geq 0, \medskip \\
   0,  & t<0,
   \end{cases}
 \end{equation*}
where $\lfloor X \rfloor$ denotes the greatest integer less than or equal to $X$.  Here the  terminal time is given by $T=3$s and $500$ recording time steps are received.  Suppose that the unknown scatterer consists of three same disk with radius $0.2$ and they are located at $(-1.5,-1.5)$, $(0,0)$ and $(1.5,1.5)$. The sound speed $c(x)$ inside the disks are $10$, $20$ and $40$, respectively.

Figure \ref{fig:sawtooth}(b) plots the truncated waveform of the smooth sawtooth  wave. In this numerical example, the parameter $\sigma$ is chosen as $0.2$. As shown in figure \ref{fig:sawtooth}(c)-(f), it is clear to see that the small scatterers are determined one by one as the terminal time increases. The main reason is that we first receive  the scattered field generated by the point-like scatterer that is closest to the location of the  incident source.

\begin{figure}
\hfill\subfigure[geometry setting]{\includegraphics[width=0.33\textwidth]
                   {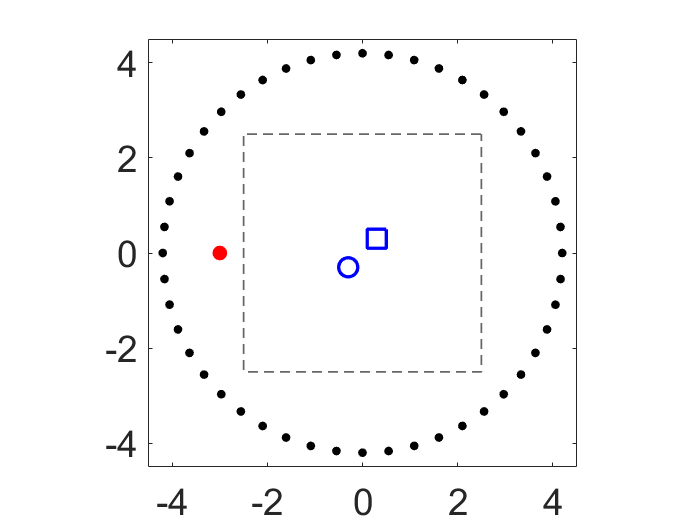}}\hfill
\hfill\subfigure[]{\includegraphics[width=0.33\textwidth]
                   {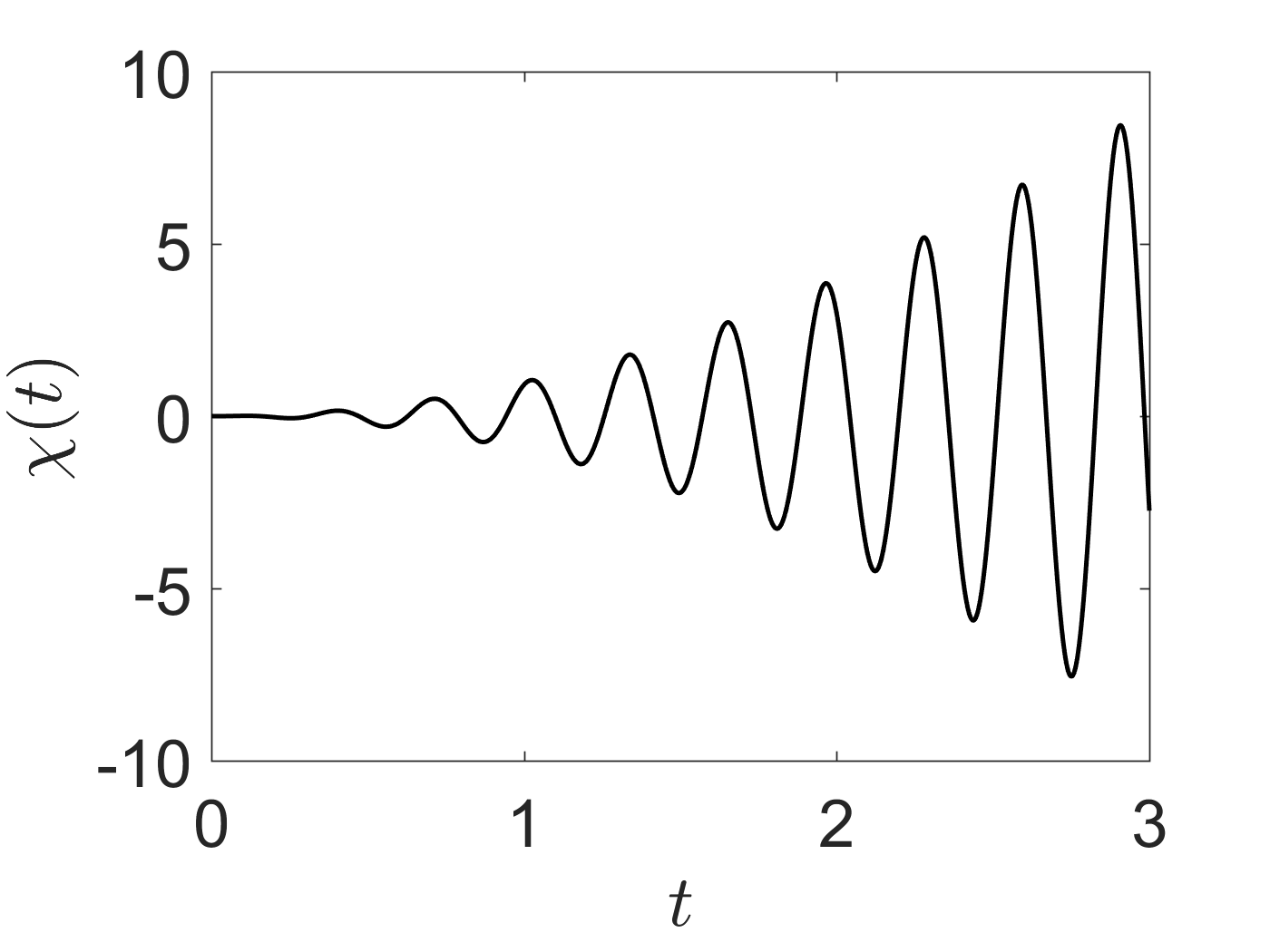}}\hfill
\hfill\subfigure[$\sigma=0$]{\includegraphics[width=0.33\textwidth]
                   {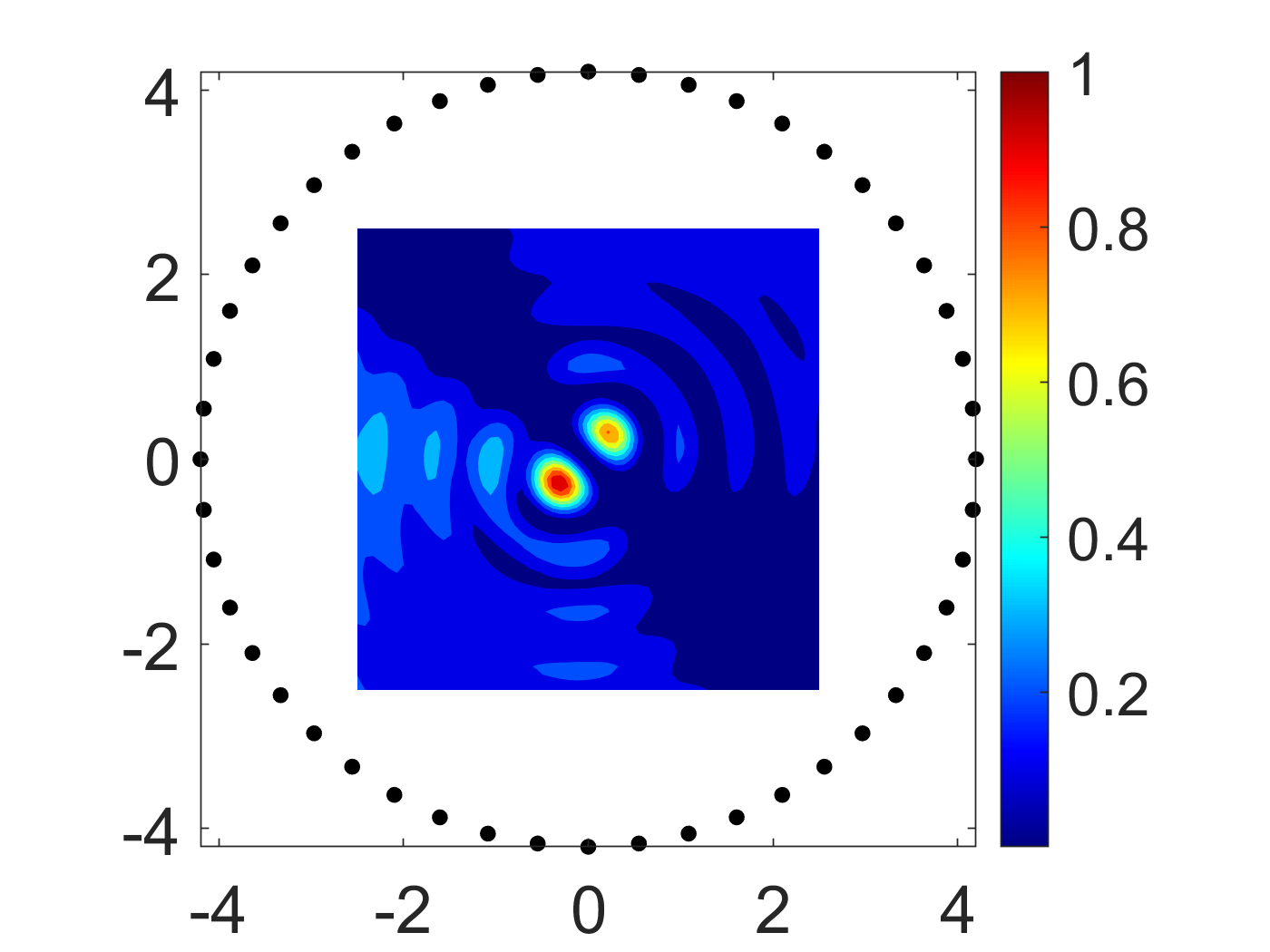}}\hfill\\
\hfill\subfigure[$\sigma=3$]{\includegraphics[width=0.33\textwidth]
                   {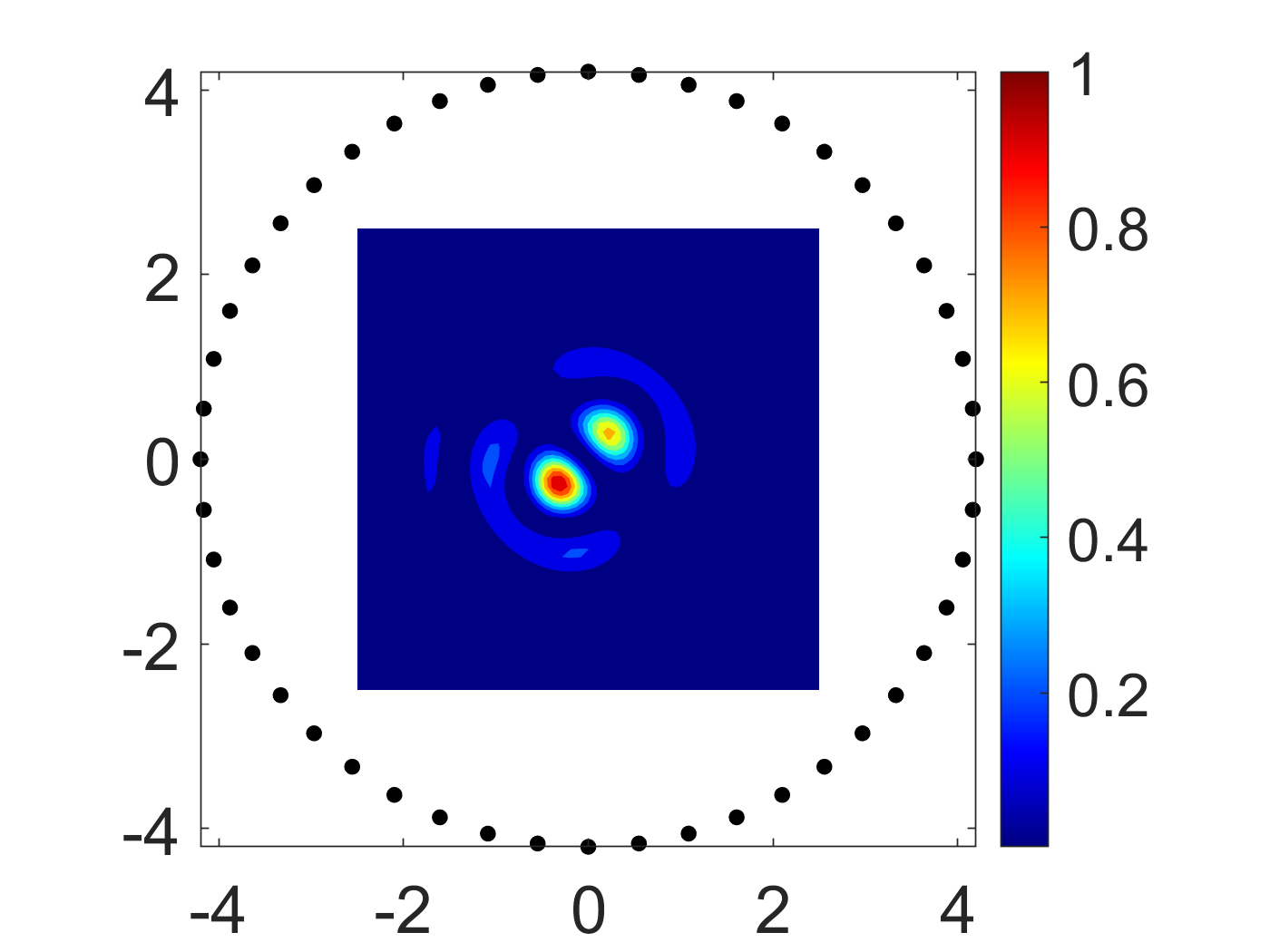}}\hfill
\hfill\subfigure[$\sigma=6$]{\includegraphics[width=0.33\textwidth]
                   {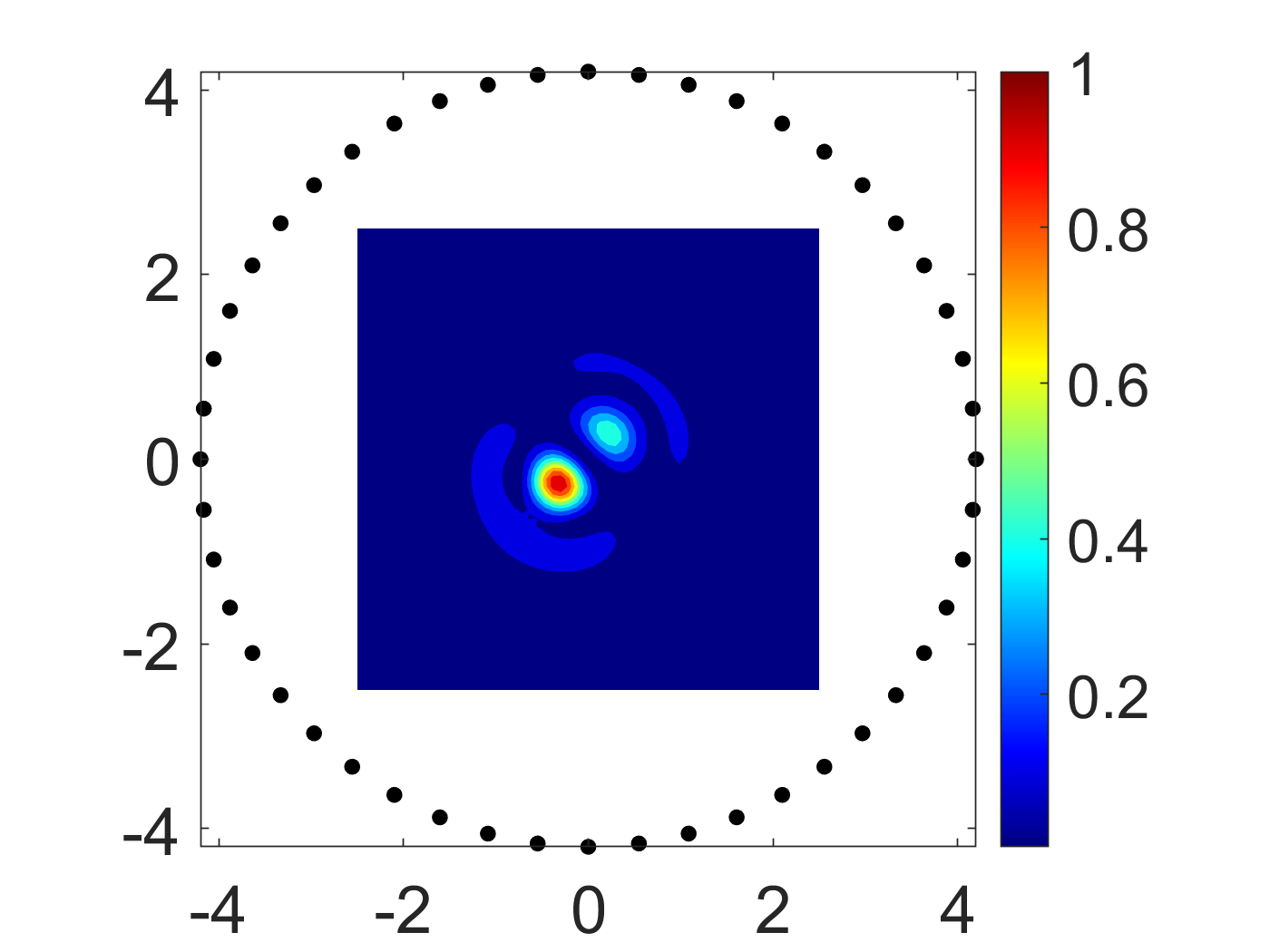}}\hfill
\hfill\subfigure[$\sigma=9$]{\includegraphics[width=0.33\textwidth]
                   {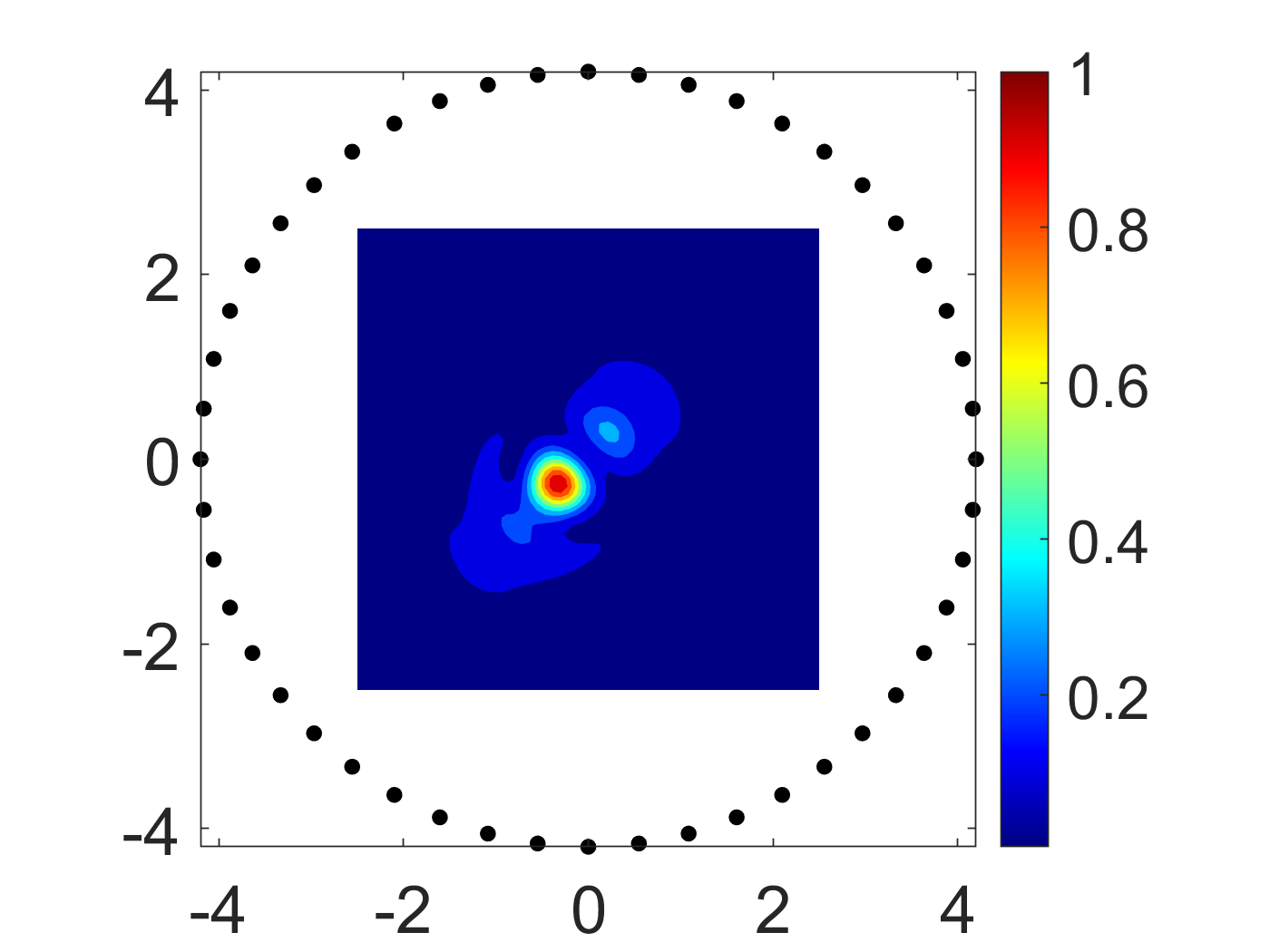}}\hfill\\
\caption{\label{fig:tempered} Contour plots of the imaging functional $\widetilde{\mathcal{I}}^{(2)}(z_{\ell})$ defined in \eqref{eq:indicator-normal}  by using the tempered sinusoidal wave $\chi_3$ as the incident signal. (a): Geometry setting of the problem; (b) plotting of the tempered sinusoidal wave; (c)-(f): reconstructions with  different
parameters $\sigma$.}
\end{figure}

Finally, we consider a more challenging case. The incident wave is represented by a tempered sinusoidal wave function, that is,
 \begin{equation*}
 \chi_3(t):=
   \begin{cases}
   \displaystyle t^2 \sin(20\,t),  & t\geq 0, \medskip \\
   0,  & t<0.
   \end{cases}
 \end{equation*}
Artificial scattered fields were collected with $T=10$s and $500$ recording time steps. The true scatterers consists of a disk with radius $0.2$ located at $(-0.3,-0.3)$ and a square with length $0.4$ located at $(0.3, 0.3)$.
The sound speed $c(x)$ inside these two domains is given by $20$ and $25$, respectively. In Figure \ref{fig:tempered}, we compare the reconstructed  results using different parameters $\sigma$. It illustrates that the resolution is dependent on the parameter $\sigma$, which is in agreement with the theoretical analysis.

\subsection{Reconstruction of an extended scatterer with multiple sources}

In this part, we aim to reconstruct an extended scatterer by the proposed time-domain direct sampling method. The multi-source imaging functionals are given by
\begin{equation*}\label{eq:indicator-multi}
  \mathcal{I}^{(2)}(z)=\int_{0}^{T} \int_{\Gamma_s} \left| \int_{\Gamma} p^s(x,x_s, t+c_0^{-1}|x-z|)\,  \varphi_{\sigma}^{(2)}(x, t,z)  \, \mathrm{d}s(x)\right|^2 \mathrm{d}s(x_s) \mathrm{d}t, \quad z\in D,
\end{equation*}
where $\Gamma_s$ denotes a measurement surface such that the sources are away from the target objects and  $\varphi_{\sigma}^{(d)}$ is  given by
\begin{equation*}
 \varphi_{\sigma}^{(d)}(x,t,z)=
  \begin{cases}
  \displaystyle \frac{\mathrm{e}^{-\sigma (t+ c_0^{-1}|x-z|)}}{\sqrt{8\pi c_0^{-1}|x-z|}}, & d=2, \bigskip\\
 \displaystyle \frac{\mathrm{e}^{-\sigma(t+ c_0^{-1}|x-z|)}}{4\pi|x-z|}, & d=3.
  \end{cases}
\end{equation*}
In the following examples,
we consider a kite-shaped scatterer
(see figure \ref{fig:kite} (a)), which is parameterized as
\begin{equation*}
  x(\theta)=( \cos\theta+0.65\cos 2\theta-0.65, \ 1.5 \sin\theta), \quad \theta\in[0,\, 2\pi],
\end{equation*}
and a pear-shaped scatterer (see figure \ref{fig:pear} (a)), which is parameterized as
\begin{equation*}
  x(\theta)=(1.6 + 0.24 \cos 3\theta)( \cos\theta,\ \sin\theta), \quad \theta\in[0,\, 2\pi],
\end{equation*}
 We assume that there are $24$ receivers placed on the boundary of a square with sidelength $10$. The sampling domain is chosen as $D=[-4,4]\times[-4,4]$. Figure \ref{fig:kite}(a) presents the geometrical setting of  the problem. Here the locations of the incident source are marked by a red circle, the locations of receivers are marked by  black points and the boundary of the scatterer is plotted by the blue curve.

 To begin with, we consider the reconstruction of a kite-shaped scatterer by using the smooth sawtooth wave, which is generated by single incident wave source.
 Here the parameter $\sigma$ is set to be $0.1$, the terminal time is $T=8$s and $400$ recording time is obtained.
 Unfortunately, it is hardly to determine the shape of the extended target, see figure \ref{fig:kite}(b).
 To overcome this difficulty, we employ multiple incident sources to capture more geometrical details of the target.
 In figure \ref{fig:kite-multi-sources}, we show the reconstructed results with different number of incident sources.
 As expected, reconstructions become more accurate when more transmitter locations are used.

Finally, we consider the reconstruction of a pear-shaped scatterer by using the tempered sinusoidal wave. The terminal time is $T=6$s and $300$ recording time is obtained. From figure \ref{fig:pear}, we can find that our method performs well for the extended scatterer when a suitable parameter $\sigma$ is selected. The optimal choice of $\sigma$ is still unclear and this will be further investigated in our future work.

\begin{figure}
\centering
\subfigure[]{\includegraphics[width=0.45\textwidth]
                   {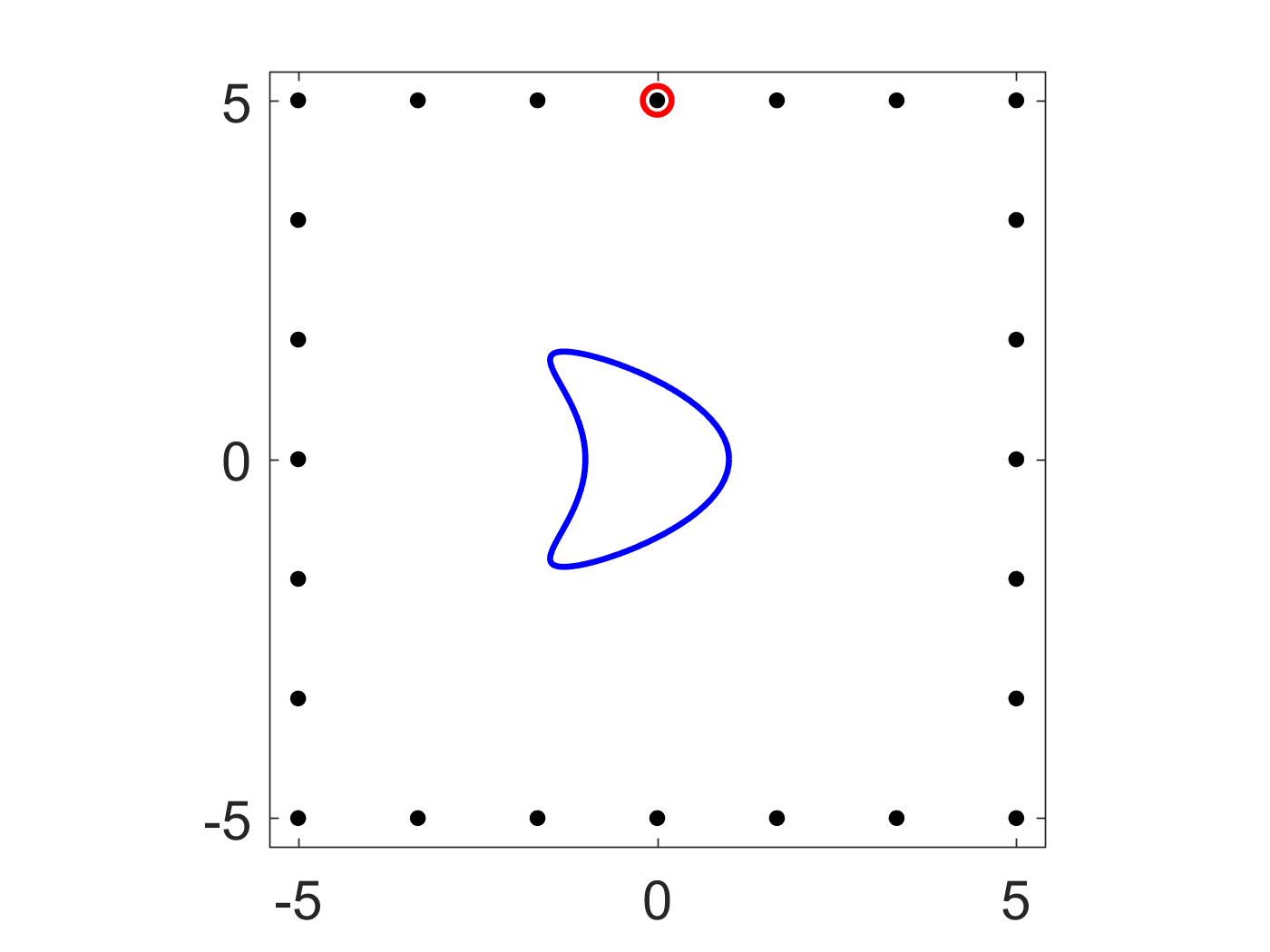}}
\subfigure[]{\includegraphics[width=0.45\textwidth]
                   {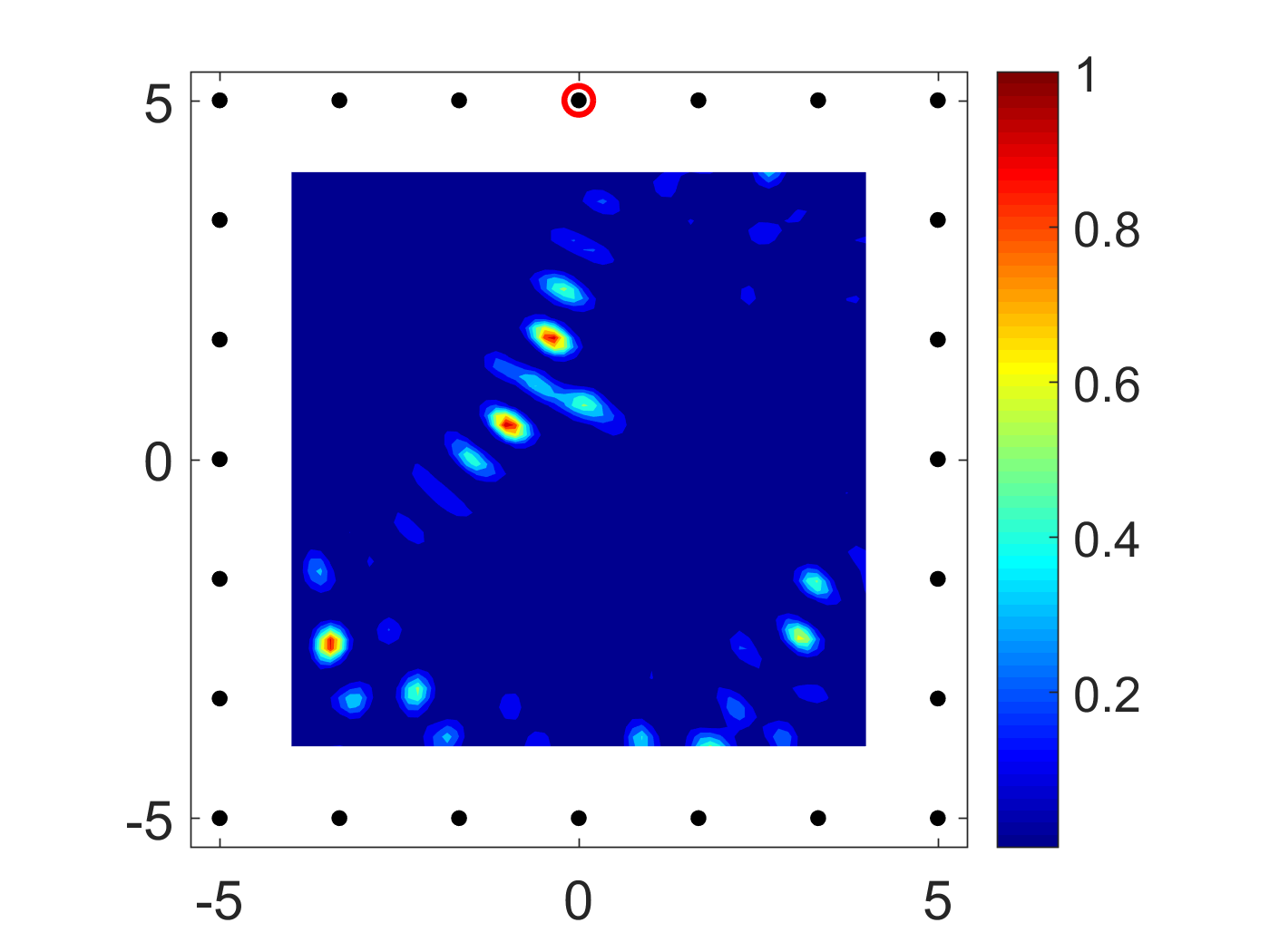}}
\caption{\label{fig:kite} (a): geometry setting;
 (b): reconstruction of the kite-shaped scatterer with one incident source by using the smooth sawtooth wave $\chi_2$. }
\end{figure}

\begin{figure}
\centering
\subfigure[]{\includegraphics[width=0.45\textwidth]
                   {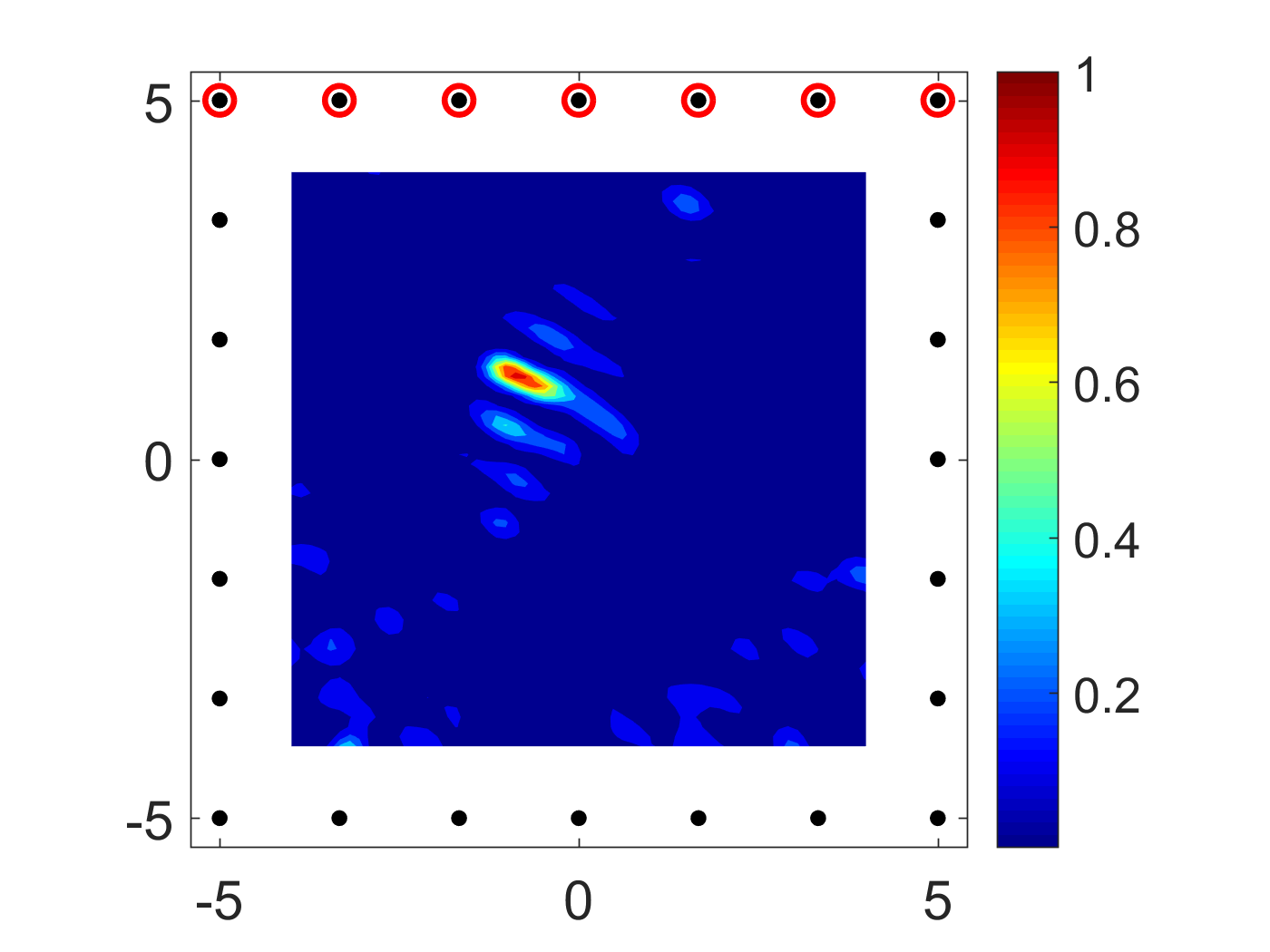}}
\subfigure[]{\includegraphics[width=0.45\textwidth]
                   {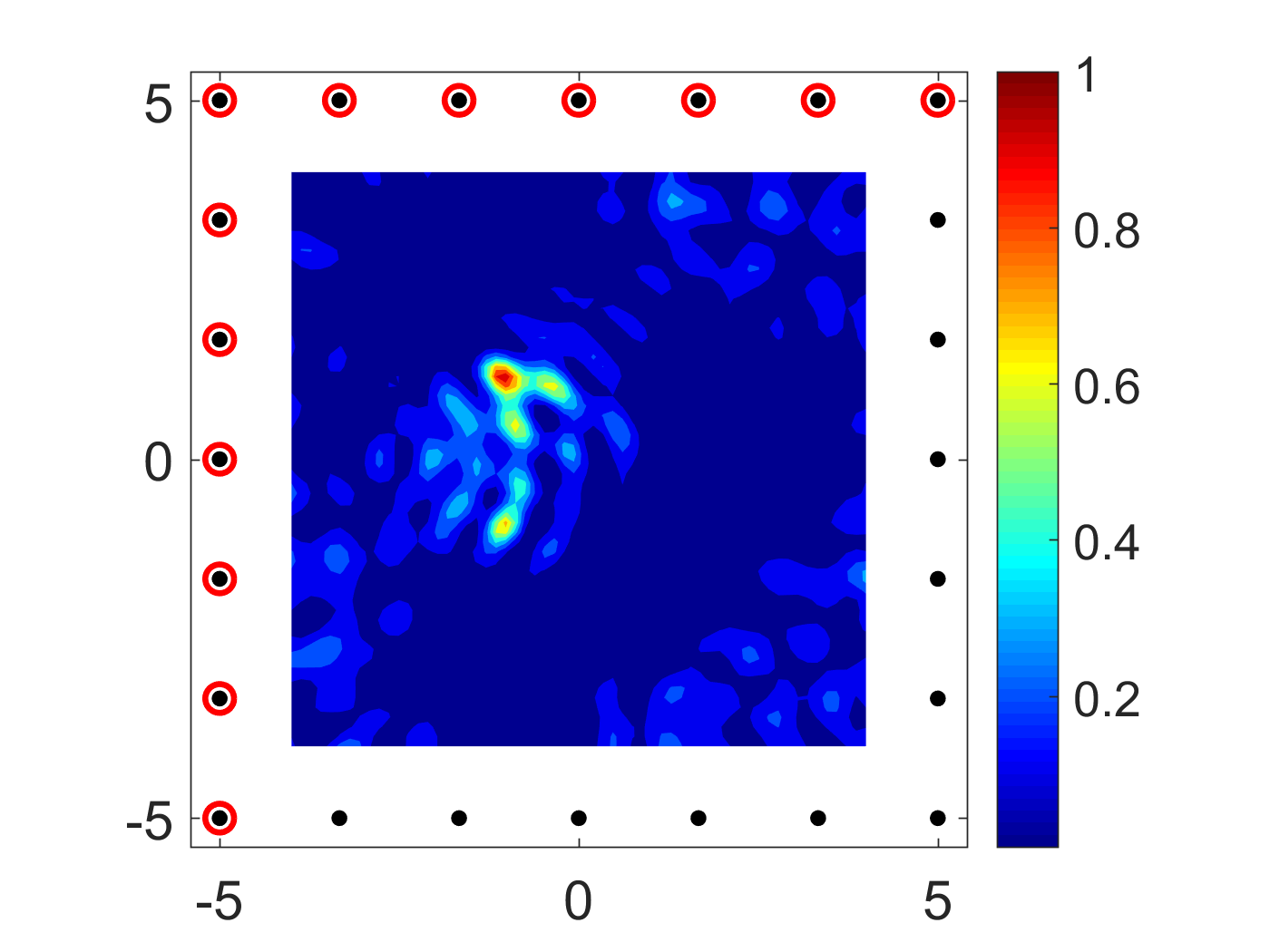}}\\
\subfigure[]{\includegraphics[width=0.45\textwidth]
                   {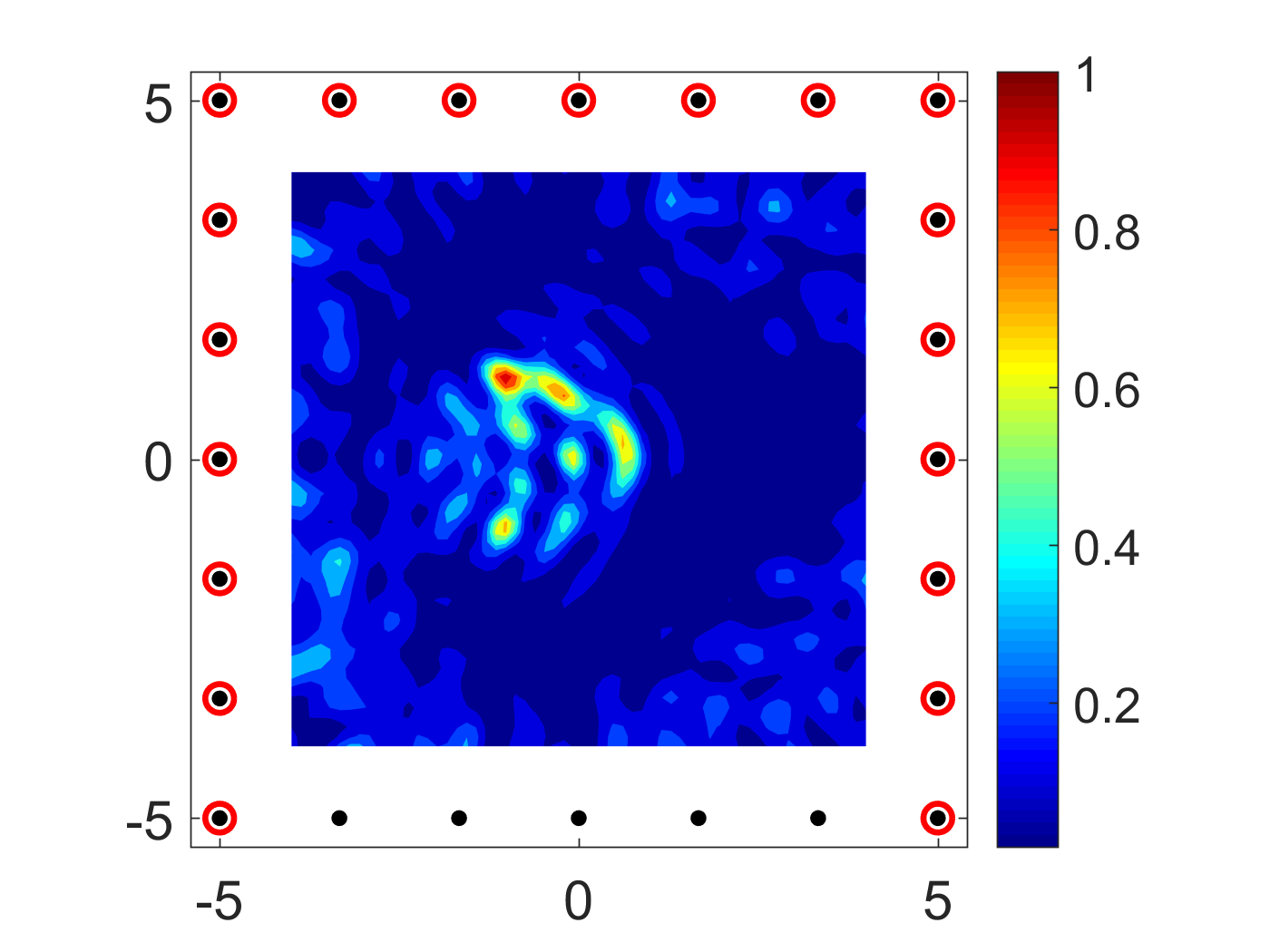}}
\subfigure[]{\includegraphics[width=0.45\textwidth]
                   {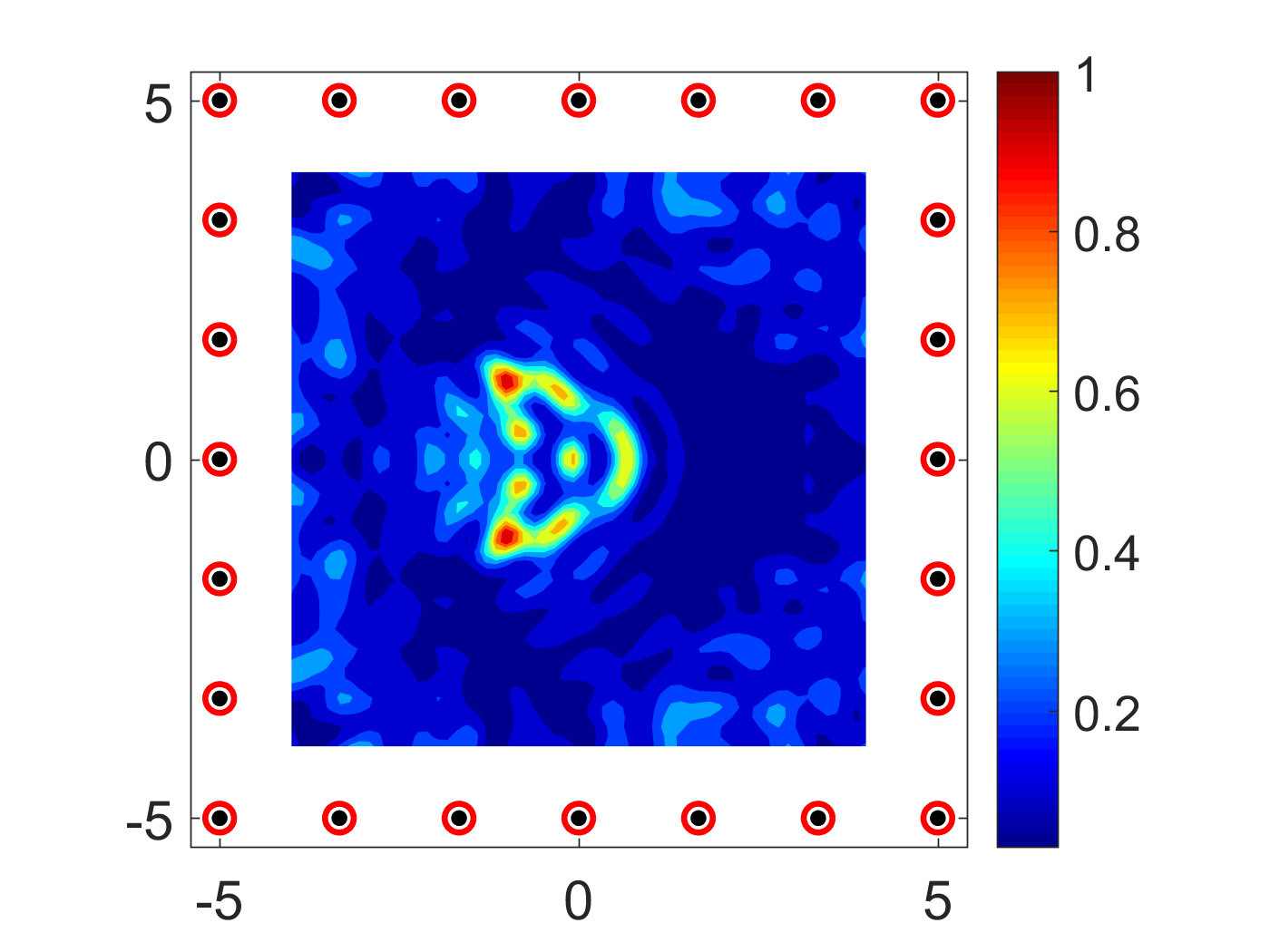}}\\
\caption{\label{fig:kite-multi-sources} Reconstructions of the kite-shaped scatterer with different number of incident sources by using the smooth sawtooth wave $\chi_2$, where the black points denotes the locations of  receivers and the red small circles denotes the locations of the incident sources.    }
\end{figure}

\begin{figure}
\centering
\subfigure[geometry setting ]{\includegraphics[width=0.45\textwidth]
                   {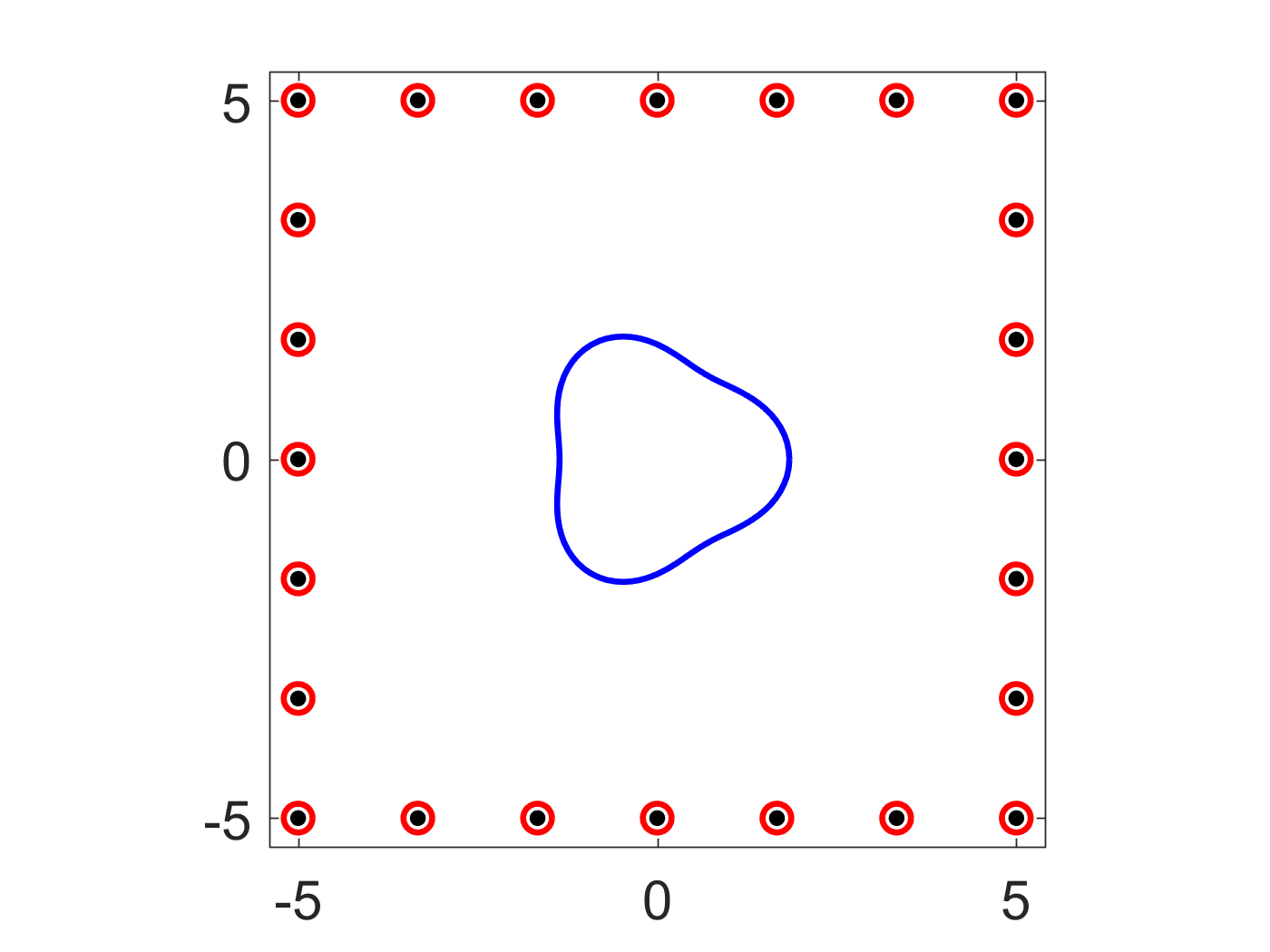}}
\subfigure[$\sigma=0$]{\includegraphics[width=0.45\textwidth]
                   {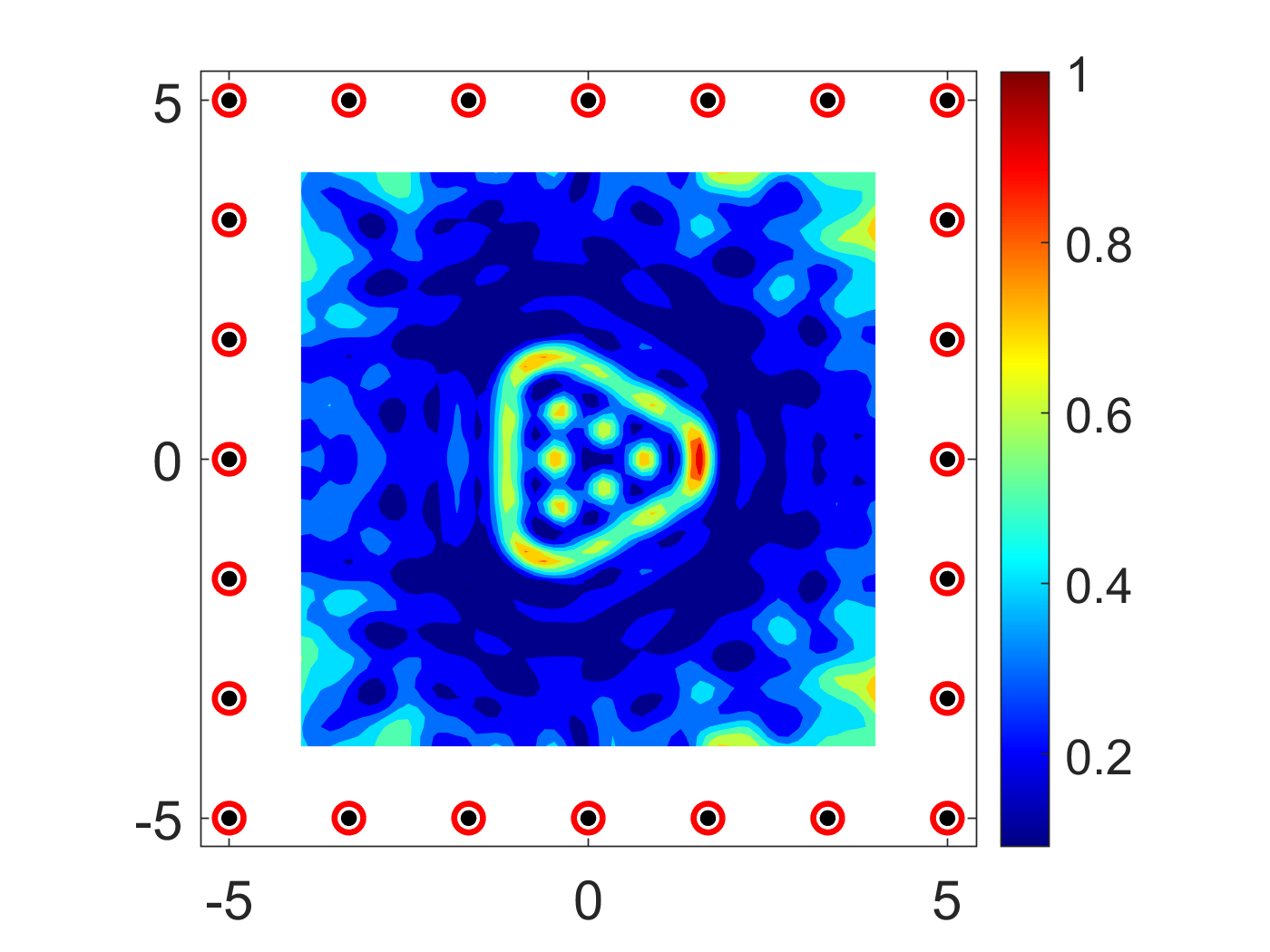}}\\
 \subfigure[$\sigma=1$]{\includegraphics[width=0.45\textwidth]
                   {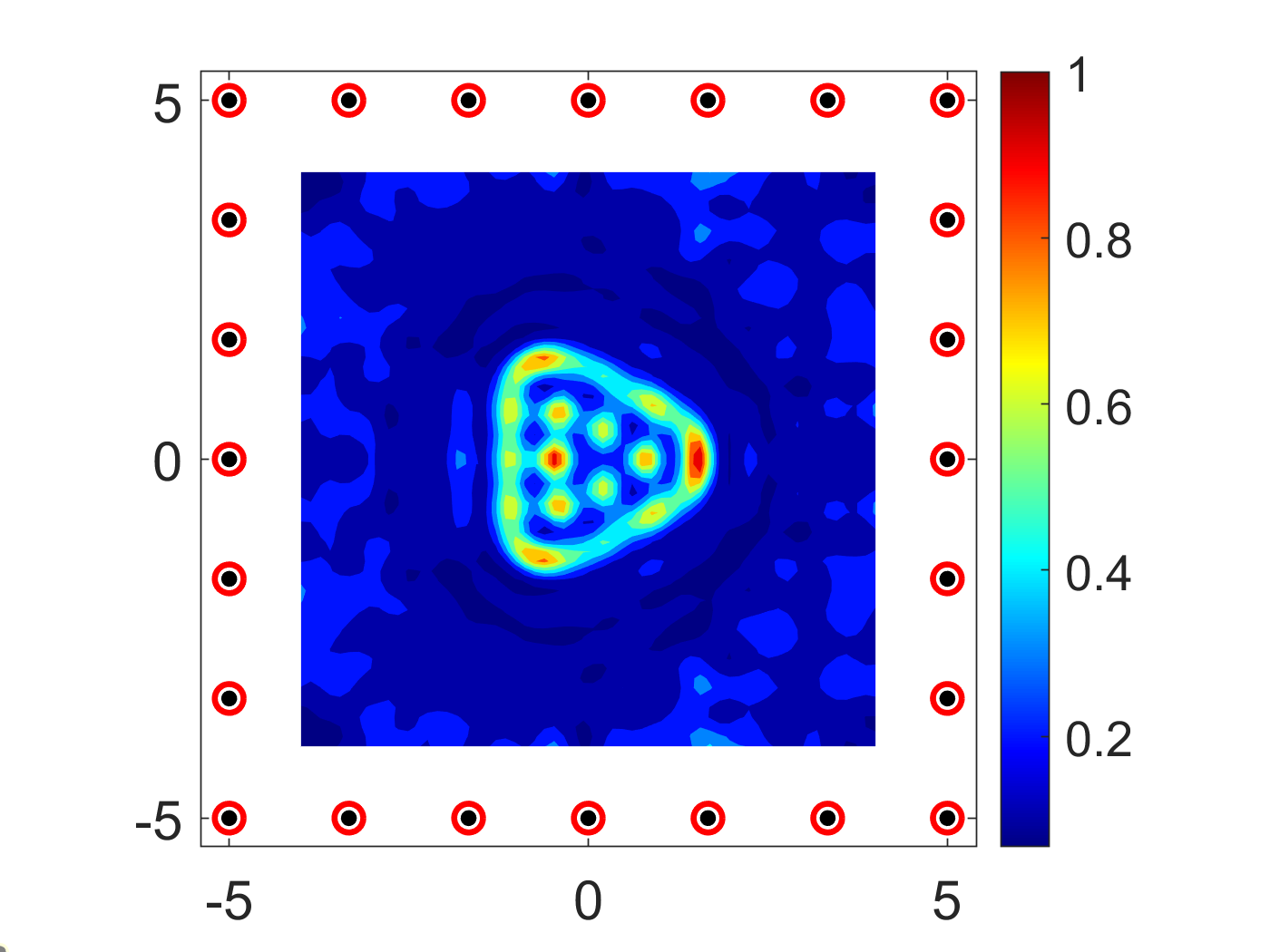}}
 \subfigure[$\sigma=2$]{\includegraphics[width=0.45\textwidth]
                   {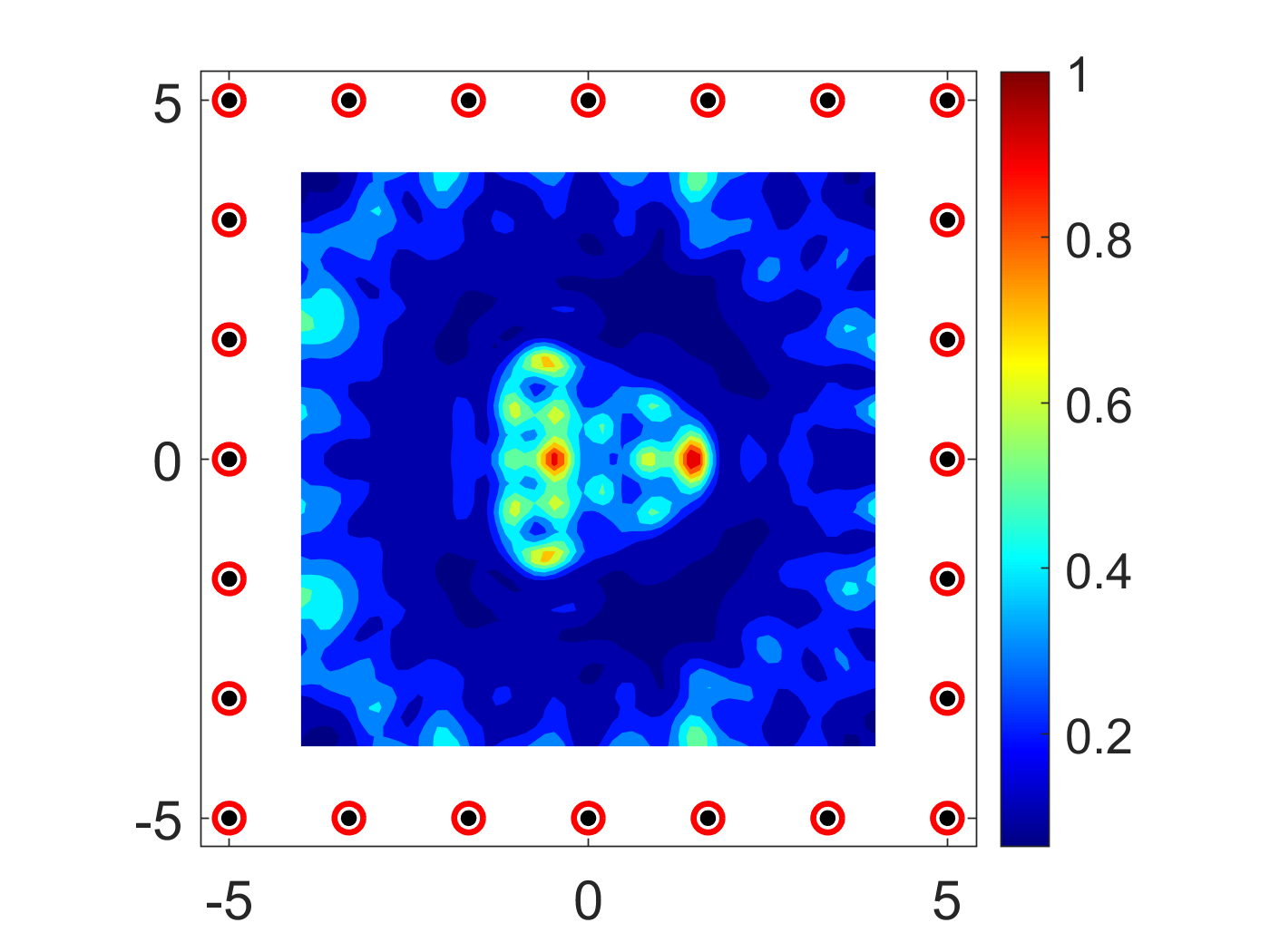}}
\caption{\label{fig:pear} Exact and reconstructed pear-shaped scatterer with different parameters $\sigma$ by using the tempered sinusoidal wave $\chi_3$. }
\end{figure}


\section*{Acknowledgement}

The work of Xianchao Wang was supported by the NSFC grant under No. 12001140.
 The work of Yukun Guo was supported by NSFC grants under No. 11971133.


\end{document}